\documentclass[notitlepage,11pt,reqno]{amsart}
\usepackage[foot]{amsaddr}
\usepackage[numbers,sort]{natbib}

\usepackage{amssymb,nicefrac,bm,upgreek,mathtools,verbatim}
\usepackage[final]{hyperref}
\usepackage[mathscr]{eucal}
\usepackage{dsfont}

\usepackage{color}
\definecolor{dmagenta}{rgb}{.4,.1,.5}
\definecolor{dblue}{rgb}{.0,.0,.5}
\definecolor{mblue}{rgb}{.0,.0,.8}
\definecolor{ddblue}{rgb}{.0,.0,.4}
\definecolor{dred}{rgb}{.6,.0,.0}
\definecolor{dgreen}{rgb}{.0,.5,.0}
\definecolor{Eeom}{rgb}{.0,.0,.5}

\usepackage[normalem]{ulem}

\usepackage[margin=1in]{geometry}
\allowdisplaybreaks

\newtheorem{lemma}{Lemma}[section]
\newtheorem{theorem}{Theorem}[section]

\newtheorem{corollary}{Corollary}[section]

\theoremstyle{definition}
\newtheorem{definition}{Definition}[section]
\newtheorem{assumption}{Assumption}[section]

\newtheorem{example}{Example}[section]
\newtheorem{remark}{Remark}[section]

\numberwithin{equation}{section}

\hypersetup{
  colorlinks=true,
  citecolor=dblue,
  linkcolor=dblue,
  frenchlinks=false,
  pdfborder={0 0 0},
  naturalnames=false,
  hypertexnames=false,
  breaklinks}
\usepackage[capitalize,nameinlink]{cleveref}

\crefname{section}{Section}{Sections}
\crefname{subsection}{Subsection}{Subsections}
\crefname{condition}{Condition}{Conditions}
\crefname{hypothesis}{Hypothesis}{Conditions}
\crefname{assumption}{Assumption}{Assumptions}
\crefname{lemma}{Lemma}{Lemmas}
\crefname{claim}{Claim}{Claims}

\Crefname{figure}{Figure}{Figures}

\crefformat{equation}{\textup{#2(#1)#3}}
\crefrangeformat{equation}{\textup{#3(#1)#4--#5(#2)#6}}
\crefmultiformat{equation}{\textup{#2(#1)#3}}{ and \textup{#2(#1)#3}}
{, \textup{#2(#1)#3}}{, and \textup{#2(#1)#3}}
\crefrangemultiformat{equation}{\textup{#3(#1)#4--#5(#2)#6}}%
{ and \textup{#3(#1)#4--#5(#2)#6}}{, \textup{#3(#1)#4--#5(#2)#6}}%
{, and \textup{#3(#1)#4--#5(#2)#6}}

\Crefformat{equation}{#2Equation~\textup{(#1)}#3}
\Crefrangeformat{equation}{Equations~\textup{#3(#1)#4--#5(#2)#6}}
\Crefmultiformat{equation}{Equations~\textup{#2(#1)#3}}{ and \textup{#2(#1)#3}}
{, \textup{#2(#1)#3}}{, and \textup{#2(#1)#3}}
\Crefrangemultiformat{equation}{Equations~\textup{#3(#1)#4--#5(#2)#6}}%
{ and \textup{#3(#1)#4--#5(#2)#6}}{, \textup{#3(#1)#4--#5(#2)#6}}%
{, and \textup{#3(#1)#4--#5(#2)#6}}

\crefdefaultlabelformat{#2\textup{#1}#3}

\makeatletter
\DeclareRobustCommand\widecheck[1]{{\mathpalette\@widecheck{#1}}}
\def\@widecheck#1#2{%
    \setbox\z@\hbox{\m@th$#1#2$}%
    \setbox\tw@\hbox{\m@th$#1%
       \widehat{%
          \vrule\@width\z@\@height\ht\z@
          \vrule\@height\z@\@width\wd\z@}$}%
    \dp\tw@-\ht\z@
    \@tempdima\ht\z@ \advance\@tempdima2\ht\tw@ \divide\@tempdima\thr@@
    \setbox\tw@\hbox{%
       \raise\@tempdima\hbox{\scalebox{1}[-1]{\lower\@tempdima\box
\tw@}}}%
    {\ooalign{\box\tw@ \cr \box\z@}}}

\def\subsection{\@startsection{subsection}{0}%
\z@{\linespacing\@plus\linespacing}{\linespacing}%
{\bf}}

\makeatother


\DeclareMathOperator{\Exp}{\mathbb{E}} 
\DeclareMathOperator{\Prob}{\mathbb{P}} 
\newcommand{\D}{\mathrm{d}}          
\newcommand{\RR}{\mathbb{R}}         
\newcommand{\Rd}{{\mathbb{R}^d}}       
\newcommand{\Ind}{\mathds{1}}            





\newcommand{\grad}{\nabla}

\newcommand{\sB}{\mathscr{B}}    
\newcommand{\sU}{\mathscr{U}}
\newcommand{\cC}{\mathcal{C}}     
\newcommand{\cD}{\mathcal{D}}     


\newcommand{\sH}{\mathscr{H}}

\newcommand{\cK}{\mathcal{K}}


\newcommand{\sR}{\mathscr{R}}

\newcommand{\abs}[1]{\lvert#1\rvert}
\newcommand{\norm}[1]{\lVert#1\rVert}

\providecommand{\pro}[1]{(#1_t)_{t \geq 0}}

\providecommand{\semi}[1]{\{#1_t: t \geq 0\}}
\providecommand{\seq}[1]{(#1_n)_{n\in \mathbb{N}}}

\DeclareMathOperator{\Tr}{Tr}
\DeclareMathOperator{\diam}{diam}

\newcommand{\ex}{\mathbb{E}}

\DeclareMathOperator{\dist}{dist}

\DeclareMathOperator{\Psidel}{\Psi(--\Delta)}

\setcounter{tocdepth}{2}
\begin{document}

\title[Maximum principles]%
{\sc \textbf{Hopf's lemma for viscosity solutions to a class of non-local equations with applications}}

\author{Anup Biswas and J\'{o}zsef L\H{o}rinczi}

\address{Anup Biswas \\
Department of Mathematics, Indian Institute of Science Education and Research, Dr. Homi Bhabha Road,
Pune 411008, India, anup@iiserpune.ac.in}

\address{J\'ozsef L\H{o}rinczi \\
Department of Mathematical Sciences, Loughborough University, Loughborough LE11 3TU, United Kingdom,
J.Lorinczi@lboro.ac.uk}

\date{}


\begin{abstract}
We consider a large family of integro-differential equations
and establish a non-local counterpart of Hopf's lemma, directly expressed in terms of the symbol of the operator.
As closely related problems, we also obtain a variety of maximum principles for viscosity solutions. In our
approach we combine direct analysis with functional integration, allowing a robust control around the boundary
of the domain, and make use of the related ascending ladder height-processes. We then apply these results to a
study of principal eigenvalue problems, the radial symmetry of the positive solutions, and the overdetermined
non-local torsion equation.
\end{abstract}
\keywords{Bernstein functions of the Laplacian, non-local Dirichlet problem, principal eigenvalue problem, Hopf's
lemma, moving planes, overdetermined torsion equation, subordinate Brownian motion, ascending ladder hight process}
\subjclass[2000]{Primary: 35P30, 35B50 \; Secondary: 35S15}

\maketitle

\section{\bf Introduction}
Hopf's boundary point lemma is a classic result in analysis, belonging to the range of maximum principles,
and it proved to be a fundamental and powerful tool in the study of partial differential equations. For a
general introduction we refer to \cite{PW}, and to the magisterial paper \cite{PS} for a more modern reassessment
and further developments.
It is a natural question whether a variant of Hopf's lemma with a similar benefit might be possible to obtain
for integro-differential equations. Such non-local equations and related problems are currently much researched
in both pure and applied mathematics, also attracting a wide range of applications in the natural sciences and
elsewhere.

Our aim in the present paper is to derive and prove Hopf's lemma and related maximum principles for a class of
non-local equations in which the key operator term is a Bernstein function of the Laplacian, denoted below by
$\Psi(-\Delta)$. (For precise definitions see Section 2.) This class is increasingly used in various
directions such as generalizations of the Caffarelli-Silvestre extension technique \cite{KM}, the blow-up of
solutions of stochastic PDE with white or coloured noise \cite{DLN}, scattering theory and spectral thresholds
\cite{IW20}, or the potential theory of subordinate Brownian motions \cite{SSV,KSV}.

There are good reasons why this more general framework is of interest. One is that there is a large class of
operators with singular jump kernels whose elements are comparable with some $\Psi(-\Delta)$ in a specific sense,
see for details \cite[Th. 26, Cor. 27]{BGR14b}. Another is that diverse choices of Bernstein functions often
produce in various aspects qualitatively different behaviours. The fractional Laplacian obtained for $\Psi(u) =
u^{\alpha/2}$, $0 < \alpha < 2$, appears to be the most often featured in the literature, and is a fundamental
example of a non-local operator with a heavy-tailed jump measure. For many applications, however, operators with
exponentially light jump kernels are also of similar interest, for which a basic example is the relativistic operator
with exponent $\frac{\alpha}{2}$ and rest mass parameter $m>0$, obtained for $\Psi(u) = (u + m^{2/\alpha})^{\alpha/2}
-m$, see a summary and applications in \cite{LS}. For some more choices of particular interest we refer to Example
\ref{Eg2.1} below.
One can see these different behaviours on the case of non-local
Schr\"odinger operators of the form $H=\Psi(-\Delta)+V$, with a multiplication operator $V$, and observe how the
relativistic operator plays in a particular sense an intermediary role between the fractional and the classical
Laplacian. Some new phenomena, which do not occur for Schr\"odinger operators with the usual Laplacian, are ground
state domination on all the eigenfunctions \cite{KL15}, a sharp ``phase transition" in the decay rates of the
eigenfunctions as the L\'evy measure is progressively changed from heavy to light tails \cite{KL17}, eigenfunction
decay determined by global lifetimes in cases of zero eigenvalues for  potentials $V$ decaying to zero at infinity
instead of local lifetimes for operators generating negative eigenvalues \cite{KL19}, moment excess-driven phenomena
for exponentially light-tailed operators \cite{AL}, or varying contractivity properties of the evolution semigroups
$\{e^{-tH}: t \geq 0\}$ \cite{KKL}.

A closely related interest is a study of non-linear fractional Schr\"odinger operators in bounded domains or full
space; for a general survey of exterior value problems we refer to \cite{RO} and the references therein.
Non-linearities introduce new aspects such as criticality, layer and radial solutions \cite{CS14,CS15}, concentration
phenomena \cite{FQT,S13,DDW,A15,A19}, Ambrosetti-Prodi type bifurcations \cite{DQT,GS16,BL20}, structural instability
due to non-additive energies \cite{DKV,DV}, to name a few.

In the study of exterior value problems the non-local counterpart of the classical Hopf's lemma is of fundamental
interest. A first result in this direction has been obtained in \cite{FJ15, GS16}, where the authors proved it for
Dirichlet exterior value problems involving the fractional Laplacian $(-\Delta)^{\alpha/2}$, $\alpha \in
(0,2)$. The problem has been studied for the fractional $p$-Laplacian in \cite{CLQ, DQ17}.

The purpose of this paper is to obtain Hopf's lemma in the generality of non-local operators given by Bernstein
functions of the Laplacian $\Psi(-\Delta)$ and various choices of the non-linear source term, so that we have
expressions directly involving the symbol of the operator. Our approach here combines analytic and stochastic
tools based on a probabilistic representation of the operator semigroups, which proved to be robust in tackling
important basic difficulties encountered in the control close to the boundary of the domain when purely analytic
techniques are applied \cite{BL17a,BL17b,BL20}. A further highlight of our approach is a use of the ascending
ladder height-process related to the random process generated by $-\Psi(-\Delta)$, which has not been much
explored in the literature in this context. Apart from covering a large class of equations, another main technical
step forward is that our results are valid for viscosity solutions, while even for the fractional Laplacian the
results in \cite{GS16} have been established for classical solutions, and in \cite{DQ17, FJ15} for continuous,
non-negative weak super-solutions only.

Let $\cD\subset\Rd$, $d\geq 2,$ be a bounded domain with a $\cC^{1,1}$ boundary, and $c, f$ given continuous
functions. We will be interested in the viscosity solutions of
\begin{equation}\label{E2.6}
\left\{\begin{array}{ll}
-\Psidel u(x) + c(x) u(x) = f(x)  \quad \text{in} \; \cD,
\\[2mm]
\hspace{3.9cm} u = 0  \qquad\;\, \text{in}\; \cD^c.
\end{array}
\right.
\end{equation}
First in Theorem \ref{T3.1} we obtain an Aleksandrov-Bakelman-Pucci type estimate for viscosity solutions for the
above non-local problem, which immediately implies a maximum principle for narrow domains shown in \cref{C3.1}.
A second result, presented in \cref{T2.1}, establishes existence and uniqueness for the principal eigenfunction
for the operator $-\Psidel + c(x)$, again in viscosity sense. Next we prove Hopf's lemma in \cref{T2.2} and
identify the function of the distance to the boundary, directly expressed in terms of $\Psi$, replacing the
normal derivative in the classical variant of the result. Then we turn to proving a refined maximum principle in
\cref{T2.3}, an anti-maximum principle in \cref{T2.4}, and in \cref{T3.3} we obtain that the principal eigenvalue
of the non-local Schr\"odinger operator $-\Psidel + c$ with Dirichlet exterior condition is an isolated eigenvalue.

In the final section of this paper we also present two applications of these maximum principles. One direction is
to show radial symmetry of positive viscosity solutions  in symmetric domains. This will be discussed
in \cref{T3.4}. A second application is to the torsion function, which is a quantity of interest in mathematical
physics, and also plays a significant role in probability, corresponding to mean exit times from domains. In
\cref{T4.2} we discuss the torsion equation in our context, over-determined by a constraint imposed on the domain
boundary, which is a non-local development of a classical inverse problem by Serrin and Weinberger \cite{S71, W71}. 
Some recent works considering overdetermined problems for nonlocal operators include \cite{FJ15,GS16,SV19} for the
fractional Laplacian, and \cite{BJ20} for the operators $\Psidel$ considered in the present paper.

\section{\bf Bernstein functions of the Laplacian and subordinate Brownian motion}
\subsection{Bernstein functions and subordinate Brownian motion}
In this section we briefly recall the essentials of the framework we use in this paper.
A Bernstein function is a non-negative completely monotone function, i.e., an element of the set
$$
\mathcal B = \left\{f \in C^\infty((0,\infty)): \, f \geq 0 \;\; \mbox{and} \:\; (-1)^n\frac{d^n f}{dx^n} \leq 0,
\; \mbox{for all $n \in \mathbb N$}\right\}.
$$
In particular, Bernstein functions are increasing and concave. We will make use below of the subset
$$
{\mathcal B}_0 = \left\{f \in \mathcal B: \, \lim_{x\downarrow 0} f(x) = 0 \right\}.
$$
Let $\mathcal M$ be the set of Borel measures $\mathfrak m$ on $\RR \setminus \{0\}$ with the property that
$$
\mathfrak m ((-\infty,0)) = 0 \quad \mbox{and} \quad \int_{\RR\setminus\{0\}} (y \wedge 1)  \, \mathfrak m(dy) < \infty.
$$
Notice that, in particular, $\int_{\RR\setminus\{0\}} (y^2 \wedge 1) \, \mathfrak m(dy) < \infty$ holds, thus
$\mathfrak m$ is a L\'evy measure supported on the positive semi-axis. It is well-known that every Bernstein function
$\Psi \in {\mathcal B}_0$ can be represented in the form
\begin{equation*}
\Psi(x) = bx + \int_{(0,\infty)} (1 - e^{-yx}) \, \mathfrak m(\D{y})
\end{equation*}
with $b \geq 0$, moreover, the map $[0,\infty) \times \mathcal M \ni (b,\mathfrak m) \mapsto \Psi \in
{\mathcal B}_0$ is bijective \cite[Th.~3.2]{SSV}. Also, $\Psi$ is said to be a complete Bernstein function if there 
exists a Bernstein function $\widetilde\Psi$ such that
$$
\Psi(x)= x^2 \mathcal{L}(\widetilde\Psi)(x), \quad x>0\,,
$$
where $\mathcal{L}$ denotes Laplace transform. Every complete Bernstein function is also a Bernstein function, and the
L\'evy measure $\mathfrak m$ of a complete Bernstein function has a completely monotone density with respect to
Lebesgue measure \cite[Th.~6.2]{SSV}. For a detailed discussion of Bernstein functions we refer to the monograph \cite{SSV}.

Bernstein functions are closely related to subordinators. Recall that a one-dimensional L\'evy process $\pro S$ on a
probability space $(\Omega_S, {\mathcal F}_S, \mathbb P_S)$ is called a subordinator whenever it satisfies $S_s \leq S_t$
for $s \leq t$, $\mathbb P_S$-almost surely. A basic fact is that the Laplace transform of a subordinator is given by a
Bernstein function, i.e.,
\begin{equation*}
\label{lapla}
\ex_{\mathbb P_S} [e^{-xS_t}] = e^{-t\Psi(x)}, \quad t, x \geq 0,
\end{equation*}
holds, where $\Psi \in {\mathcal B}_0$. In particular, there is a bijection between the set of subordinators on a given
probability space and Bernstein functions with vanishing right limits at zero.

Let $\pro B$ be an $\Rd$-valued Brownian motion on Wiener space $(\Omega_W,{\mathcal F}_W, \mathbb P_W)$, running twice
as fast as standard $d$-dimensional Brownian motion, and let $\pro {S}$ be an independent subordinator with characteristic
exponent $\Psi$. The random process
$$
\Omega_W \times \Omega_S \ni (\omega_1,\omega_2) \mapsto B_{S_t(\omega_2)}(\omega_1) \in \Rd
$$
is called subordinate Brownian motion under $\pro {S}$. For simplicity, we will denote a subordinate Brownian motion
by $\pro X$, its probability measure for the process starting at $x \in \Rd$ by $\mathbb P^x$, and expectation with respect
to this measure by $\ex^x$. Note that the characteristic exponent of a pure jump process $\pro X$ (i.e., with $b=0$) is given
by
\begin{equation}\label{E2.4}
\Psi(\abs{z}^2)= \int_{\Rd\setminus\{0\}} (1-\cos(y\cdot z)) j(\abs{y}) \, \D{y},
\end{equation}
where the L\'evy measure of $\pro X$ has a density $y\mapsto j(\abs{y})$, $j:(0, \infty)\to (0, \infty)$, with respect to
Lebesgue measure, given by
\begin{equation}\label{E2.3}
j(r) = \int_0^\infty (4\pi t)^{-d/2} e^{-\frac{r^2}{4t}} \, \mathfrak m (\D{t}).
\end{equation}

Below we will use Bernstein functions satisfying the following conditions. These have been extensively used in \cite{BL17a,BL17b},
and for applications in potential theory we refer to \cite{BGR14b}.
\begin{definition}
{\rm
\label{WLSC}
The function $\Psi \in {\mathcal B}_0$ is said to satisfy a
\begin{enumerate}
\item[(i)]
\emph{weak lower scaling (WLSC) property} with parameters ${\underline\mu} > 0$, $\underline{c} \in(0, 1]$ and
$\underline{\theta}\geq 0$, if
$$
\Psi(\gamma x) \;\geq\; \underline{c}\, \gamma^{\underline\mu} \Psi(x), \quad x>\underline{\theta}, \; \gamma\geq 1.
$$
\item[(ii)]
\emph{weak upper scaling (WUSC) property} with parameters $\bar\mu > 0$, $\bar{c} \in[1, \infty)$ and $\bar{\theta}\geq 0$,
if
$$
\Psi(\gamma x) \;\leq\; \bar{c}\, \gamma^{\bar{\mu}} \Psi(x), \quad x>\bar{\theta}, \; \gamma\geq 1.
$$
\end{enumerate}
}
\end{definition}
\noindent
Clearly, we have $\bar{\mu}\geq \underline{\mu}$.

\begin{example}\label{Eg2.1}
{\rm
Some important examples of $\Psi$ satisfying WLSC and WUSC include the following
cases with the given parameters, respectively:
\begin{itemize}
\item[(i)]
$\Psi(x)=x^{\alpha/2}, \, \alpha\in(0, 2]$, with ${\underline\mu} = \frac{\alpha}{2}$, $\underline{\theta}=0$,
and $\bar\mu = \frac{\alpha}{2}$, $\bar{\theta}=0$.
\item[(ii)]
$\Psi(x)=(x+m^{2/\alpha})^{\alpha/2}-m$, $m> 0$, $\alpha\in (0, 2)$, with ${\underline\mu} = \frac{\alpha}{2}$,
$\underline{\theta}=0$, and $\bar\mu = 1$, $\bar{\theta}=0$ and $\bar\mu = \frac{\alpha}{2}$ for any $\bar{\theta}>0$.
\item[(iii)]
$\Psi(x)=x^{\alpha/2} + x^{\beta/2}, \, \alpha, \beta \in(0, 2]$, with ${\underline\mu} = \frac{\alpha}{2}
\wedge \frac{\beta}{2}$, $\underline{\theta}=0$ and $\bar\mu = \frac{\alpha}{2} \vee \frac{\beta}{2}$,
$\bar{\theta}=0$.
\item[(iv)]
$\Psi(x)=x^{\alpha/2}(\log(1+x))^{-\beta/2}$, $\alpha \in (0,2]$, $\beta \in [0,\alpha)$ with
${\underline\mu}=\frac{\alpha-\beta}{2}$, $\underline{\theta}=0$ and $\bar\mu=\frac{\alpha}{2}$, $\bar{\theta}=0$.
\item[(v)]
$\Psi(x)=x^{\alpha/2}(\log(1+x))^{\beta/2}$, $\alpha \in (0,2)$, $\beta \in (0, 2-\alpha)$, with
${\underline\mu}=\frac{\alpha}{2}$, $\underline{\theta}=0$ and $\bar\mu=\frac{\alpha+\beta}{2}$, $\bar{\theta}=0$.
\end{itemize}
The above are complete Bernstein functions, and an example of a Bernstein function which is not complete is
$1 - e^{-x}$, describing the Poisson subordinator. Corresponding to the examples above, the related processes are
(i) $\frac{\alpha}{2}$-stable subordinator, (ii) relativistic $\frac{\alpha}{2}$-stable subordinator, (iii) sums
of independent subordinators of different indices,
etc.
}
\end{example}
We will use below the following recurring assumptions.
\begin{assumption}\label{A2.1}
{\rm
$\Psi \in {\mathcal B}_0$ satisfies both the WLSC and WUSC properties with respect to suitable values
$(\underline{\mu}, \underline{c}, \underline{\theta})$ and $(\bar{\mu}, \bar{c}, \bar{\theta})$, respectively,
with $0<\underline{\mu}\leq\bar{\mu}<1$.
}
\end{assumption}

A second assumption is on the L\'evy jump kernel of the subordinate process.
\begin{assumption}\label{A2.2}
{\rm
There exists a constant $\varrho > 0$ such that
\begin{equation}\label{E2.5}
j(r+1)\geq \varrho\, j(r),\quad \text{for all}\; \; r\geq 1,
\end{equation}
where $j$ is given by \eqref{E2.3}.
}
\end{assumption}
\noindent
Note that if $\Psi$ is a complete Bernstein function and satisfies $\Psi(r)\asymp r^{\gamma}\ell(r)$ as $r\to\infty$,
for a suitable $\gamma\in(0, 1)$ and a locally bounded and slowly varying function $\ell$, then \eqref{E2.5} holds
\cite[Th. 13.3.5]{KSV}.

For some of our proofs below we will use some information on the normalized ascending ladder-height process of
$\pro {X^1}$, where $X^{1}_t$ denotes the first coordinate of $X_t$. Recall that the ascending ladder-height
process of a L\'evy process $\pro Z$ is the process of the right inverse $(Z_{L^{-1}_t})_{t\geq 0}$, where $L_t$
is the local time of $Z_t$ reflected at its supremum (for details and further information we refer to \cite[Ch. 6]{B}).
Also, we note that the ladder-height process of $\pro {X^1}$ is a subordinator with Laplace exponent
$$
\tilde\Psi(x)=\exp\left(\frac{1}{\pi}\int_0^\infty \frac{\log \Psi(x^2y^2)}{1+y^2}\, \D{y}\right), \quad x \geq 0.
$$
Consider the potential measure $V(x)$ of this process on the half-line $(-\infty, x)$. Its Laplace transform is
given by
$$
\int_0^\infty V(x) e^{-sx}\, \D{x}= \frac{1}{s\tilde\Psi(s)}, \quad s > 0.
$$
It is also known that $V=0$ for $x\leq 0$, the function $V$ is continuous and strictly increasing in $(0, \infty)$
and $V(\infty)=\infty$ (see \cite{F74} for more details). As shown in \cite[Lem.~1.2]{BGR14} and \cite[Cor.~3]{BGR14b},
there exists a constant $C = C(d)$ such that
\begin{equation}\label{E3.5}
\frac{1}{C}\,{\Psi(1/r^2)}\leq \frac{1}{V^2(r)}\leq C\, {\Psi(1/r^2)}, \quad r>0.
\end{equation}
Using \cite[Th.~4.6, Lem.~7.3]{BGR15} and \cref{A2.1}, we see that for a suitable positive constant $\kappa_1$ we have that for
$x\in\cD$
\begin{align}
\Exp^x[\uptau_\cD] &\geq \kappa_1 V(\delta_\cD(x))
\label{E3.6}
\end{align}
holds, where $\delta_\cD(x)=\dist (x, \partial \cD)$ and
$$
\uptau_{\cD} = \inf\{t > 0: \, X_t \not\in \cD\}
$$
denotes the first exit time of $\pro X$ from $\cD$.

\subsection{Bernstein functions of the Laplacian}
From now on we consider
$$
\Psi \in {\mathcal B}_0 \quad \mbox{with} \quad b=0.
$$
Using \eqref{E2.4}, we define the operator
\begin{align*}
-\Psidel f(x) &= \int_{\Rd} \left(f(x+z)-f(x)-\Ind_{\{\abs{z}\leq 1\}} z\cdot \grad f(x)\right) j(\abs{z}) dz
\\
&=\frac{1}{2} \int_{\Rd} (f(x+z)+f(x-z)-2 f(x)) j(\abs{z}) dz,
\end{align*}
for $f\in\cC^2_{\rm b}(\Rd)$, by functional calculus. We use the notations $\cC_{\rm b}(\Rd)$ for the space of 
bounded continuous functions on $\Rd$, and $\cC^2_{\rm b}(\Rd)$ for the space of twice continuously differentiable 
bounded continuous functions on $\Rd$.
The operator $-\Psidel$ is the Markov generator of
subordinate Brownian motion $\pro X$ corresponding to the subordinator uniquely determined by $\Psi$, i.e.,
$$
{e^{-t\Psidel}}f(x) = \ex^x [f(X_t)], \quad t \geq 0, \, x \in \Rd, \, f \in L^2(\Rd).
$$

Next consider a bounded domain $\cD \subset \Rd$, and the space $C_{\rm c}^\infty(\cD)$. We can define the linear
operator $-\Psidel^{\cD}$ on this domain, given by the Friedrichs extension of $-\Psidel|_{C_{\rm c}^\infty(\cD)}$.
It can be shown that the form-domain of $\Psidel^{\cD}$ contains all functions which are in the form-domain of
$-\Psidel$ and almost surely zero outside of $\cD$. To ease the notation, from now on we use the simple notation
$-\Psidel$ also on $\cD$, understanding it to be this operator.

Let now $c: \cD \to \RR$ be a bounded continuous function, and define it as a multiplication operator on $C_{\rm c}^\infty(\cD)$.
We consider the operator $-\Psidel + c$ on $L^2(\cD)$, again as the Friedrichs extension of the operator sum
$(-\Psidel + c)|_{C_{\rm c}^\infty(\cD)}$.  Define the operator
$$
T_t f(x) =\Exp^x\left[e^{\int_0^t c(X_s) ds} f(X_t) \Ind_{\{t<\uptau_{\cD}\}}\right], \quad t\geq 0\,.
$$
It is shown in \cite[Lem.~3.1]{BL17a} that $\{T_t\; :\; t\geq 0\}$
is a strongly continuous semigroup on $L^2(\cD)$, with infinitesimal generator $-\Psidel + c$. Probabilistically,
this means that $\semi {T}$ is the Markov semigroup of subordinate Brownian motion killed upon exit from $\cD$. 
Moreover, there exists a pair $(\psi^*, \lambda)\in\cC_{{\rm b}}(\Rd)\times\RR$, $\psi^*>0$ in $\cD$, satisfying
\begin{equation}\label{E3.2}
\psi^*(x) = \Exp^x\left[e^{\int_0^t (c(X_s)-\lambda) ds} \psi^*(X_t) \Ind_{\{t<\uptau_{\cD}\}}\right], \quad t>0, \;
x\in \cD,
\end{equation}
and $\psi^*(x)=0$ in $\cD^c$. For further details we refer the reader to \cite{BL17a}.

\section{\bf Hopf's lemma and maximum principles for non-local equations}\label{S-statement}
\subsection{Aleksandrov-Bakelman-Pucci estimate and Hopf's lemma for viscosity solutions}
Let $\cD\subset\Rd$, $d\geq 2,$ be a bounded domain with a $\cC^{1,1}$ boundary. With no loss of generality
we assume that $0\in \cD$. With given continuous functions $c$ and $f$, our purpose is to consider viscosity
solutions of the Dirichlet exterior value problem \eqref{E2.6}.

Recall the definition of a viscosity solution. Denote by $\cC^2_{\rm b}(x)$ the space of bounded functions, twice
continuously differentiable in a neighbourhood of $x \in \Rd$.

\begin{definition}
{\rm
An upper semi-continuous function $u:\Rd\to\RR$ in $\bar \cD$ is said to be a
\emph{viscosity sub-solution} of
\begin{equation}
\label{visco}
-\Psidel u + c\, u =  f\quad \text{in}\; \cD,
\end{equation}
if for every $x\in \cD$ and test function $\varphi\in \cC^2_{\rm b}(x)$ satisfying $u(x)=\varphi(x)$ and
$\varphi(y) > u(y)$, $y\in \Rd\setminus\{x\}$, we have
$$
-\Psidel \varphi(x) + c(x) u(x) \geq  f(x).
$$
Similarly, a lower semi-continuous function is a \emph{viscosity super-solution} of \eqref{visco} whenever
$\varphi(y) < u(y)$, $y\in \Rd\setminus\{x\}$, implies $-\Psidel \varphi(x) + c(x) u(x) \leq  f(x)$.
Furthermore, $u$ is said to be a \emph{viscosity solution} if it is both a viscosity sub- and super-solution.
}
\end{definition}

One of our main tools is an Aleksandrov-Bakelman-Pucci type maximum principle.
\begin{theorem}[\textbf{ABP-type estimate}]
\label{T3.1}
Suppose that $\Psi$ satisfies the WLSC property with parameters $(\underline{\mu}, \underline{c}, \underline{\theta})$.
Let $f: \cD \to \RR$ be a continuous function, and $u\in\cC_{\rm b}(\Rd)$ a viscosity sub-solution of
$$
-\Psidel u = - f\quad \text{in}\;\; \{u>0\}\cap \cD, \quad \text{and}\quad u\leq 0\quad \text{in}\; \; \cD^c.
$$
Then for every $p>\frac{d}{2\underline{\mu}}$ there exists a constant $C = C(p,\Psi,\diam \cD)$, such that
\begin{equation}\label{ET3.1A}
\sup_{\cD} u^+ \;\leq \; C \norm{f^+}_{L^p(\cD)}.
\end{equation}
\end{theorem}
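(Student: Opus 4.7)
The plan is to dominate $u$ pointwise by a Green-potential barrier built from $f^+$ on a ball containing $\cD$, and then convert that pointwise bound into the desired $L^p$ inequality by appealing to the known Green function asymptotics for subordinate Brownian motion under WLSC.

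\emph{Step 1 (construction of the barrier).} Enclose $\cD$ in an open ball $B$ of diameter comparable to $\diam\cD$, and set
$$
v(x) \df \Exp^x\Bigl[\int_0^{\uptau_B} f^+(X_s)\Ind_\cD(X_s)\,\D s\Bigr] = \int_\cD G_B(x,y)\, f^+(y)\,\D y,
$$
where $G_B$ denotes the Green kernel of $-\Psidel$ killed on exit from $B$. Interior regularity for Green potentials of continuous data (possibly after mollification of $f^+$) yields $v \in \cC_{\rm b}(\Rd)$, $v \equiv 0$ on $B^c$, and $\Psidel v = f^+\Ind_\cD$ classically in $\cD$. Introducing the mean exit time $h(x) \df \Exp^x[\uptau_\cD]$, which is bounded, vanishes on $\cD^c$ and satisfies $\Psidel h = 1$ in $\cD$, the perturbed function $v_\varepsilon \df v + \varepsilon h$ obeys $\Psidel v_\varepsilon = f^+ + \varepsilon > f$ on $\cD$ and stays non-negative.

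\emph{Step 2 (viscosity comparison).} The claim is $u \leq v_\varepsilon$ on $\Rd$ for every $\varepsilon>0$. Since $v_\varepsilon \geq 0$ and $u \leq 0$ on $\cD^c$, the difference $w \df u - v_\varepsilon$ is upper semi-continuous on $\bar\cD$ and non-positive on $\cD^c$. If the claim fails, $w$ attains a positive maximum at some $x_0 \in \cD$, which after a standard strictness penalization may be assumed strict. Because $u(x_0) > v_\varepsilon(x_0) \geq 0$, the point $x_0$ lies in $\{u>0\}\cap\cD$, where the viscosity subsolution hypothesis applies. The function $\varphi \df v_\varepsilon + w(x_0)$ is bounded, $\cC^2$ near $x_0$, and touches $u$ strictly from above at $x_0$, so it is an admissible test function, and the subsolution inequality forces
$$
-f(x_0) \leq -\Psidel \varphi(x_0) = -\Psidel v_\varepsilon(x_0) = -f^+(x_0) - \varepsilon,
$$
i.e.\ $f^+(x_0) + \varepsilon \leq f(x_0) \leq f^+(x_0)$, a contradiction. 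Thus $u \leq v_\varepsilon$ on $\Rd$, and letting $\varepsilon \downarrow 0$ gives $u \leq v$ on $\Rd$.

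\emph{Step 3 (from pointwise bound to the $L^p$ estimate).} For $p > d/(2\underline\mu)$ and $\tfrac1p+\tfrac1{p'}=1$, Hölder's inequality and $u\leq v$ give
$$
\sup_{\cD} u^+ \leq \Bigl(\sup_{x\in\cD}\|G_B(x,\cdot)\|_{L^{p'}(\cD)}\Bigr)\|f^+\|_{L^p(\cD)}.
$$
Using the global bound $G_B(x,y) \leq G_{\Rd}(x,y) \leq C\, |x-y|^{-d}/\Psi(|x-y|^{-2})$ from the Bogdan--Grzywny--Ryznar Green function asymptotics, together with WLSC to provide $\Psi(|x-y|^{-2}) \geq c\, |x-y|^{-2\underline\mu}$ on the scale $\diam\cD$, one obtains $G(x,y) \leq C'|x-y|^{-(d-2\underline\mu)}$. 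The radial integral $\int_{|y-x|\le\diam\cD}|y-x|^{-p'(d-2\underline\mu)}\,\D y$ converges precisely when $p'(d-2\underline\mu) < d$, i.e.\ for $p > d/(2\underline\mu)$, which is exactly the hypothesis, and this produces \eqref{ET3.1A} with a constant depending only on $p$, $\Psi$ and $\diam\cD$.

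The principal obstacle is Step 2: the subsolution inequality holds only on $\{u>0\}\cap\cD$, so one must simultaneously guarantee that the maximum of $w$ lies in this set and that it is strict enough to admit a genuine $\cC^2$ test function. The strict-supersolution perturbation $v + \varepsilon h$ resolves the first point and furnishes the clean contradiction; strictness of the maximum is achieved by a secondary small penalization in the spirit of viscosity doubling-of-variables; and the $\cC^2$ regularity of $v$ at $x_0$, needed for $\varphi$ to be admissible, is furnished either by interior Schauder-type estimates for Green potentials under WLSC or, more elementarily, by first approximating $f^+$ by smooth data and passing to the limit in the comparison.
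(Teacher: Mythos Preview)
Your argument is sound in outline but follows a different route from the paper. The paper exhausts $\cD_1=\{u>0\}\cap\cD$ by an increasing sequence of smooth precompact open sets $\sU_k$, defines on each $\sU_k$ the probabilistic solution $v_k(x)=\Exp^x\bigl[\int_0^{\uptau_k} f(X_s)\,\D s\bigr]+\Exp^x[u(X_{\uptau_k})]$ of $-\Psidel v_k=-f$ with exterior data $u$, invokes the viscosity comparison principle of \cite[Th.~3.8]{KKLL} to obtain $u\leq v_k$, and then applies the ABP estimate of \cite[Th.~3.1]{BL17b} (already established there for \emph{solutions}) to get $\sup_{\sU_k} v_k^+\leq \sup_{\sU_k^c} u^+ + C\norm{f}_{L^p}$; sending $k\to\infty$ kills the boundary term. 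Your approach is more self-contained: it reproves the $L^p$-to-$L^\infty$ bound from explicit Green-kernel asymptotics rather than quoting it, and it replaces the exhaustion by the clean observation that any positive maximum of $u-v_\varepsilon$ automatically lies in $\{u>0\}$, where the subsolution hypothesis is available. The trade-off is that you must justify interior $\cC^2$ regularity of $v_\varepsilon$ for it to serve as an admissible test function (not automatic when $\underline\mu<1$ and $f^+$ is merely continuous; the smooth-approximation fix you sketch does work but needs some care), and you should not route the estimate through $G_{\Rd}$, which may fail to exist in the recurrent regime --- bound $G_B$ near the diagonal directly. The paper's route sidesteps both issues by delegating the analytic work to the cited comparison principle and the pre-existing ABP estimate.
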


\begin{proof}
Let $\cD_1=\{u>0\}\cap \cD$. Consider a sequence of increasing domains $\seq\sU$ satisfying
$$
\cup_k \sU_k = \cD_1, \quad \sU_k \Subset \cD_1\quad \forall\; k \in \mathbb N,
$$
and each $\sU_k$ is the union of finitely many disjoint smooth open sets. Indeed, such an collection can be
constructed as follows: $\cD_1$ can be written as countable union of connected open sets, and each connected
component can be written as increasing union of smooth open sets. Therefore, we can easily obtain $\sU_k$
by a standard diagonalization procedure.

Fix any $k$ and define
$$
v_k(x)=\Exp^x\left[\int_0^{\uptau_k} f(X_s) ds\right] + \Exp^x[u(X_{\uptau_k})],
$$
where $\uptau_k$ is the first exit time from $\sU_k$. Since the boundary of $\partial\sU_k$ is regular by \cite[Lem.~2.9]{BGR15},
it is routine to check that $v_k\in\cC_{\rm b}(\Rd)$, see e.g. \cite[Sect.~3.1]{B18}. Moreover, $v_k$ is a viscosity solution
(see \cite{B18} , \cite[Lem.~3.7]{KKLL}) of
$$
-\Psidel v_k = - f\quad \text{in}\; \sU_k, \quad \text{and}\quad v_k= u\quad \text{in}\; \; \sU_k^c.
$$
Thus by the comparison principle in \cite[Th.~3.8]{KKLL} we then have $u\leq v_k$ in $\Rd$. Using \cite[Th.~3.1]{BL17b}
we obtain a constant $C$, dependent on $\Psi, p, \diam \cD$, satisfying
$$
\sup_{\sU_k} u^+\leq \sup_{\sU_k} v^+\leq \sup_{\sU^c_k} u^+ + C \norm{f}_{L^p(\cD)}.
$$
Letting $k\to\infty$, we finally obtain \eqref{ET3.1A}.
\end{proof}

\begin{remark}
We note that an estimate similar to \eqref{ET3.1A} has also been obtained in \cite[Prop.~1.4]{ROS} for fractional
Laplacian operators. In this paper the authors considered solutions instead of sub-solutions, and their proof
technique used an explicit formula giving the Green function of $(-\Delta)^{\nicefrac{\alpha}{2}}$ in $\Rd$.
\end{remark}

As a consequence we note the following result of its own interest for viscosity solutions.
\begin{corollary}[\textbf{Maximum principle for narrow domains}]
\label{C3.1}
Suppose that $\Psi$ satisfies WLSC. Let $u\in\cC_{\rm b}(\Rd)$ be a viscosity sub-solution of
$$
-\Psidel u + c\, u= 0\quad \text{in}\;  \cD, \quad \text{and}\quad u\leq 0\quad \text{in}\; \; \cD^c.
$$
There exists $\varepsilon = \varepsilon(\Psi,\diam \cD, \norm{c}_\infty)>0$ such that $u\leq 0$ in $\Rd$, whenever 
$|\cD|<\varepsilon$.
\end{corollary}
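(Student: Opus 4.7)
The plan is to reduce \cref{C3.1} to the ABP estimate of \cref{T3.1} by treating the zeroth-order term $cu$ as an inhomogeneity, and then exploit Hölder's inequality to absorb $\sup_\cD u^+$ into the left-hand side once $|\cD|$ is small enough.

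First I would rewrite the sub-solution inequality. If $u$ is a viscosity sub-solution of $-\Psidel u + c\,u = 0$ in $\cD$, then by the definition, for any test function $\varphi \in \cC^2_{\rm b}(x)$ touching $u$ from above at $x \in \cD$ one has $-\Psidel \varphi(x) \geq -c(x) u(x)$. Restricting attention to the open set $\{u>0\}\cap \cD$, this says exactly that $u$ is a viscosity sub-solution of
\[
-\Psidel u = -c\,u \quad\text{in } \{u>0\}\cap \cD,
\]
with $u\leq 0$ in $\cD^c$. Setting $f \df -c\,u$, on $\{u>0\}$ we have $f^+ \leq \|c\|_\infty\, u^+$, so $f$ is continuous on the (relatively) open set where the sub-solution condition is imposed, exactly as required by \cref{T3.1}.

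Next I would apply \cref{T3.1}. Fix any $p > \frac{d}{2\underline{\mu}}$; this is possible since the WLSC hypothesis is in force. The theorem yields a constant $C = C(p,\Psi,\diam \cD)$ with
\[
\sup_{\cD} u^+ \;\leq\; C\, \|f^+\|_{L^p(\cD)} \;\leq\; C\,\|c\|_\infty\, \|u^+\|_{L^p(\cD)}.
\]
Since $u^+ \leq \sup_\cD u^+$ on $\cD$, Hölder's inequality (or just the trivial bound) gives
\[
\|u^+\|_{L^p(\cD)} \;\leq\; \bigl(\sup_\cD u^+\bigr)\, |\cD|^{1/p},
\]
whence
\[
\sup_\cD u^+ \;\leq\; C\,\|c\|_\infty\, |\cD|^{1/p}\,\sup_\cD u^+.
\]

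Finally I would choose $\varepsilon>0$ so that $C\,\|c\|_\infty\, \varepsilon^{1/p} < 1$, i.e., $\varepsilon = \varepsilon(\Psi,\diam\cD,\|c\|_\infty)$. Whenever $|\cD|<\varepsilon$, the displayed inequality forces $\sup_\cD u^+ = 0$, so $u\leq 0$ in $\cD$, and combined with the exterior condition this gives $u\leq 0$ in $\Rd$.

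The only mild subtlety, which I do not expect to be an obstacle but would flag carefully, is the verification that the sub-solution property genuinely passes from $\cD$ to the (possibly disconnected, possibly irregular) open set $\{u>0\}\cap\cD$ so that \cref{T3.1} is strictly applicable. This is immediate from the pointwise nature of the viscosity condition: testing at $x\in\{u>0\}\cap \cD$ uses only that $x\in\cD$, and the test inequality is unchanged. Once this is noted, the entire argument reduces to the two-line absorption step above.
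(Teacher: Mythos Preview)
Your proof is correct and follows essentially the same approach as the paper: the paper's one-line proof simply says to apply \cref{T3.1} with $f(x)=\norm{c}_\infty u^+(x)$ on $\{u>0\}$, which amounts to exactly the absorption argument you spell out. Your choice $f=-cu$ with the bound $f^+\leq\|c\|_\infty u^+$ is a trivial variant of the paper's choice and leads to the same inequality $\sup_\cD u^+ \leq C\|c\|_\infty |\cD|^{1/p}\sup_\cD u^+$.
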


\begin{proof}
The result follows from \cref{T3.1} by choosing $f(x)=\norm{c}_\infty u^+(x)$ on $\{u>0\}$.
\end{proof}

Let $\cC_{{\rm b}, +}(\cD)$ be the space of bounded non-negative functions on $\Rd$ that are positive in $\cD$.
Also, recall that $c:\cD\to\RR$ is a bounded continuous function. To proceed to our next result, define the principal 
eigenvalue of $-\Psidel + c$ as
$$
\lambda_\cD=\inf\big\{\lambda \;  :\; \exists \; \psi\in \cC_{{\rm b}, +}(\cD)\;\; \text{such that}\, -\Psidel\psi +
c\, \psi\leq \lambda \psi\; \text{in}\; \cD\big\}.
$$
The above is in the spirit of \cite{BNV}, see also \cite{Armstrong, B18b}.
In what follows, we will be interested in characterizing the principal eigenfunction of $-\Psidel + c$ in $\cD$.

\begin{theorem}\label{T2.1}
Suppose that $\Psi$ satisfies the WLSC property with parameters $(\underline{\mu}, \underline{c}, \underline{\theta})$. 
There exists a unique $\varphi_\cD\in \cC_{{\rm b}, +}(\cD)$ with
$\varphi_\cD(0)=1$, satisfying
\begin{equation}\label{ET2.1A}
\left\{\begin{array}{ll}
-\Psidel \varphi_\cD + c\, \varphi_\cD = \lambda_\cD\, \varphi_\cD \quad \mbox{in} \; \cD,
\\[2mm]
\hspace{2.8cm} \varphi_\cD = 0 \qquad\quad \mbox{in}\; \cD^c.
\end{array}
\right.
\end{equation}
\end{theorem}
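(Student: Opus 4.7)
The plan is to extract the candidate eigenfunction from the Feynman--Kac pair $(\psi^*,\lambda)$ already furnished by identity~\eqref{E3.2}, upgrade that integral identity into the viscosity equation in~\eqref{ET2.1A}, identify $\lambda$ with the generalised principal eigenvalue $\lambda_\cD$, and then derive uniqueness from the same probabilistic representation together with sharp boundary estimates. Concretely, setting $\varphi_\cD\df\psi^*/\psi^*(0)$ produces a bounded continuous function, strictly positive in $\cD$, vanishing in $\cD^c$ by \eqref{E3.2}, and normalised by $\varphi_\cD(0)=1$.

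To upgrade \eqref{E3.2} to the viscosity equation, I pick $x_0\in\cD$ and a test function $\varphi\in \cC^2_{\rm b}(x_0)$ touching $\varphi_\cD$ from above at $x_0$. Dynkin's formula applied to $\varphi$ along the killed subordinate Brownian motion, together with the renormalised semigroup identity $T_t\varphi_\cD=e^{\lambda t}\varphi_\cD$ implicit in \eqref{E3.2}, gives after dividing by $t$ and letting $t\downarrow 0$ the inequality $-\Psidel\varphi(x_0)+c(x_0)\varphi_\cD(x_0)\geq \lambda\varphi_\cD(x_0)$; the symmetric argument with test functions touching from below produces the opposite inequality, so $\varphi_\cD$ is a viscosity solution with parameter $\lambda$. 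In particular $\lambda_\cD\leq\lambda$. Conversely, for any $\psi\in \cC_{{\rm b},+}(\cD)$ and $\lambda'$ with $-\Psidel\psi+c\,\psi\leq \lambda'\psi$ in $\cD$, iterating the associated Feynman--Kac inequality
$$
\psi(x) \;\geq\; \ex^x\!\Bigl[e^{\int_0^t(c(X_s)-\lambda')\D{s}}\,\psi(X_t)\,\Ind_{\{t<\uptau_\cD\}}\Bigr],\qquad x\in\cD,\;t>0,
$$
together with \eqref{E3.2} for $\varphi_\cD$ and a pointwise bound $\psi\geq\varepsilon\varphi_\cD$ on a compact $K\Subset\cD$ (available by continuity and strict positivity) yields $\psi(x)\geq\varepsilon\,e^{(\lambda-\lambda')t}\varphi_\cD(x)$ on $K$ for arbitrarily large $t$; boundedness of $\psi$ forces $\lambda'\geq\lambda$, hence $\lambda_\cD=\lambda$.

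For uniqueness, given $\varphi_1,\varphi_2\in \cC_{{\rm b},+}(\cD)$ solving \eqref{ET2.1A} with $\varphi_i(0)=1$, set $t_*\df \inf_{x\in\cD}\varphi_2(x)/\varphi_1(x)\in[0,1]$. The function $w\df\varphi_2-t_*\varphi_1$ is a non-negative bounded viscosity solution of $-\Psidel w+c\,w=\lambda_\cD w$ in $\cD$ vanishing in $\cD^c$, and satisfies
$$
w(x)=\ex^x\!\Bigl[e^{\int_0^t(c(X_s)-\lambda_\cD)\D{s}}\,w(X_t)\,\Ind_{\{t<\uptau_\cD\}}\Bigr],\qquad x\in\cD.
$$
If $w\not\equiv 0$, irreducibility of the killed subordinate Brownian motion gives $w>0$ throughout $\cD$; combining the Feynman--Kac lower bound with the two-sided boundary profile $\varphi_1(x)\asymp V(\delta_\cD(x))$ inherited from \cite{BL17a} (cf.~\eqref{E3.5}--\eqref{E3.6}) promotes this into $w\geq\eta\,\varphi_1$ for some $\eta>0$, contradicting the definition of $t_*$. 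Hence $w\equiv 0$ and $\varphi_2\equiv t_*\varphi_1$, while evaluating at $x=0$ gives $t_*=1$.

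The step I expect to be the main obstacle is the last one: upgrading the strict interior positivity of $w$ into a \emph{uniform} lower bound $w\geq\eta\varphi_1$ up to $\partial\cD$. This requires sharp two-sided boundary estimates, uniform across principal eigenfunctions, expressed in terms of the renewal function $V$ of the ascending ladder-height process, and the invocation of such estimates in the viscosity framework needs some care. By contrast, the passage from the integral identity \eqref{E3.2} to the viscosity equation via Dynkin's formula is routine, provided one controls the non-local tail integrals against bounded $\cC^2$ test functions.
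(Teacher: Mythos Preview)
Your approach differs substantially from the paper's. The paper never leaves the viscosity framework: after noting (via \cite[Rem.~3.2]{BL17b}) that $\psi^*$ from \eqref{E3.2} is already a viscosity solution with eigenvalue $\lambda$, it runs a sliding argument. Supposing $\lambda>\lambda_\cD$, it picks $\gamma<\lambda$ and $\psi\in\cC_{{\rm b},+}(\cD)$ with $-\Psidel\psi+c\psi\leq\gamma\psi$, sets $w_z=z\psi^*-\psi$, and studies $\mathfrak z=\sup\{z>0:w_z<0\text{ in }\cD\}$. The key device is the narrow-domain maximum principle (\cref{C3.1}): one fixes a compact $\cK\Subset\cD$ with $|\cD\setminus\cK|$ small, gets $w_z<0$ on $\cK$ for small $z$ by continuity, and then \cref{C3.1} together with a direct test-function argument forces $w_z<0$ throughout $\cD$, so $\mathfrak z>0$; the same mechanism at $z=\mathfrak z$ produces the contradiction. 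Uniqueness is handled by the identical sliding. No Feynman--Kac supermartingale inequality and no boundary profile are ever invoked; only \cref{C3.1}, hence only WLSC, is needed.

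Your probabilistic route has two genuine gaps. First, the iteration step ``$\psi\geq\varepsilon\varphi_\cD$ on $K$ yields $\psi\geq\varepsilon\,e^{(\lambda-\lambda')t}\varphi_\cD$ on $K$'' does not follow as written: the Feynman--Kac bound only gives $\psi(x)\geq\varepsilon\,e^{(\lambda-\lambda')t}\varphi_\cD(x)\cdot\tilde P_t\Ind_K(x)$, where $\tilde P_t$ is the ground-state transformed semigroup, and controlling $\inf_{x\in K}\tilde P_t\Ind_K(x)$ uniformly for large $t$ requires ergodicity or intrinsic ultracontractivity of the killed process---an extra ingredient you do not supply. (Passing from a viscosity supersolution inequality for a merely continuous $\psi$ to the supermartingale inequality also needs a comparison argument, not Dynkin's formula alone.) Second, and more seriously, the obstacle you flag for uniqueness is real under the stated hypotheses: the two-sided profile $\varphi_1\asymp V(\delta_\cD)$ and the Hopf bound $w\gtrsim V(\delta_\cD)$ require \cref{A2.1} (WLSC \emph{and} WUSC), not WLSC alone, and are in fact the content of the subsequent \cref{T2.2}. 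The paper's sliding/narrow-domain argument sidesteps the boundary entirely, which is precisely why it delivers the theorem under the weaker hypothesis.
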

\begin{proof}
First note that it follows from the arguments of \cite[Rem.~3.2]{BL17b} that $\psi^*$ in \eqref{E3.2} is a
viscosity solution of
\begin{equation*}\label{ET2.1B}
-\Psidel \psi^* + c \psi^* = \lambda\psi^*\quad \text{in}\; \cD,\quad \text{and}\quad \psi^*=0\quad \text{in}\; \cD^c.
\end{equation*}
We show that $\lambda=\lambda_\cD$. It follows from the definition that $\lambda\geq\lambda_\cD$. Suppose that
$\lambda>\lambda_\cD$. Then there exist $\gamma<\lambda$ and $\psi\in\cC_{{\rm b}, +}(\cD)$ such that
$$
-\Psidel\psi + c\psi\leq \gamma \psi \quad \text{in}\; \cD.
$$
Let $w_z (x) = z\psi^*(x)-\psi(x)$, $z\in\RR$. Fix a compact set $\cK\Subset \cD$ such that $|\cK^c\cap \cD|<\varepsilon$, 
where $\varepsilon$ is the same as in \cref{C3.1}. Take
$$
\mathfrak{z}=\sup\{z>0\; :\; w_z <0 \;\; \text{in}\; \cD\}.
$$
Since $\psi^*>0$ in $\cD$, it is easily seen that $\mathfrak{z}<\infty$. We claim that $\mathfrak{z}>0$. Indeed, note that 
by \cite[Lem.~5.8]{CS09} we have for every $z>0$
\begin{equation}\label{ET2.1C}
-\Psidel w_z +  (c-\lambda) w_z\geq (\lambda-\gamma)\psi > 0 \;\; \text{in}\; \cD.
\end{equation}
Since $\psi>0$ in $\cD$, we can choose $z$ small enough so that $w_z<0$ in $\cK$. Thus by \cref{C3.1} we have
$w_z\leq 0$ in $\cD$. Next suppose that $w_z(x_0)=0$, for a suitable $x_0\in \cD$. Consider a non-positive test
function $\varphi\in \cC_{\rm b}(x_0)$ above $w_z$ such that $\varphi(y)=0$ in $\sB_\delta(x_0)\subset \cD$ and $\varphi(y)=
w_z(y)$ in $\sB_{2\delta}(x_0)$. Applying the definition of viscosity sub-solution to \eqref{ET2.1C} we see that
$$
-\Psidel \varphi(x_0)\geq 0
$$
which implies
$$
\int_{\Rd} \varphi (x_0+y) j(|y|) dy=0.
$$
Since $\delta$ can be chosen arbitrarily small, this implies $w_z=0$ in $\Rd$, which contradicts the fact that $w_z<0$
in $\cK$. Thus $w_z<0$ in $\cD$ follows, and hence we get $\mathfrak{z}>0$. Moreover, by a similar argument we can
also show that either $w_{\mathfrak{z}}=0$ in $\Rd$ or $w_{\mathfrak{z}}<0$ in $\cD$. Note that the first case can
be ruled out since $\gamma<\lambda$. On the other hand, if $w_{\mathfrak{z}}<0$ in $\cD$, we can choose $\eta>0$ such
that $w_{\mathfrak{z} + \eta}<0$ in $\cK$ and a similar argument as above gives $w_{\mathfrak{z} + \eta}<0$ in $\cD$,
in contradiction with the definition of $\mathfrak{z}$. Thus no $\gamma$ exists and hence $\lambda=\lambda_\cD$.

To prove uniqueness, we need to show that for every $\psi\in\cC_{{\rm b}, +}(\cD)$ satisfying
$$
-\Psidel \psi + c \psi\leq \lambda_\cD \psi \quad \text{in}\; \cD,\quad \text{and}\quad \psi=0\quad \text{in}\; \cD^c,
$$
there exists $\kappa > 0$ such that $\kappa\psi=\psi^*$. This follows directly from the argument above.
\end{proof}

Our next result is Hopf's lemma for the class of non-local operators we consider. Denote
$$
\delta_\cD(x)=\dist (x, \partial \cD) \quad \mbox{and} \quad \phi(r)= \frac{1}{\sqrt{\Psi(1/r^{2})}}.
$$

Our next result is about the boundary point lemma and strong maximum principle. See also \cite{Ciomaga} for a
strong maximum principle for a class of operators using a different approach.
\begin{theorem}[\textbf{Hopf's Lemma}]
\label{T2.2}
Let $u\in\cC_{\rm b}(\Rd)$ be a non-negative viscosity super-solution of
\begin{equation}\label{ET2.2A}
-\Psidel u + c\, u= 0 \quad \text{in}\; \cD,
\end{equation}
Then either $u=0$ in $\Rd$ or $u>0$ in $\cD$. Furthermore, if Assumption \ref{A2.1} holds and $u>0$ in $\cD$, then
there exists a constant $\eta>0$ such that
\begin{equation}\label{ET2.2B}
\frac{u(x)}{\phi(\delta_\cD(x))} \geq \eta, \quad x\in \cD.
\end{equation}
\end{theorem}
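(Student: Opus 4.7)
The plan is to prove the two conclusions in turn. For the strong minimum principle I would argue by contradiction using the viscosity super-solution property with a carefully designed test function. For the quantitative Hopf bound I would combine a Feynman--Kac representation of $u$ with the sharp Dirichlet heat-kernel estimates for subordinate Brownian motion in $\cC^{1,1}$ domains, converting the resulting $V(\delta_\cD(x))$-lower bound into the desired $\phi(\delta_\cD(x))$-bound via \eqref{E3.5}.

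For the first assertion, suppose $u \not\equiv 0$ but $u(x_0) = 0$ for some $x_0 \in \cD$. By continuity pick $y_0 \neq x_0$ and $r > 0$ with $|y_0 - x_0| > r$ on which $u \geq 2\epsilon$ on $\sB_r(y_0)$. Define
\[
\varphi(y) \;=\; \epsilon\, \chi(y) \;-\; \gamma\bigl(1 - e^{-|y - x_0|^2}\bigr),
\]
where $\chi \in \cC^\infty_{\rm c}(\sB_{r/2}(y_0))$ satisfies $0 \leq \chi \leq 1$ and $\chi \equiv 1$ on $\sB_{r/4}(y_0)$. Then $\varphi \in \cC^2_{\rm b}(\Rd)$, $\varphi(x_0) = 0 = u(x_0)$ and $\grad \varphi(x_0) = 0$; moreover, for $\gamma$ small $\varphi(y) < u(y)$ for all $y \neq x_0$, as $\varphi \leq \epsilon < 2\epsilon \leq u$ on $\sB_{r/2}(y_0)$ and $\varphi < 0 \leq u$ elsewhere. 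The viscosity super-solution condition at $x_0$ (using $c(x_0) u(x_0) = 0$ and the symmetry of $j$) reduces to $\int_\Rd \varphi(x_0 + z)\, j(|z|)\, dz \leq 0$. However direct estimation yields
\[
\int_\Rd \varphi(x_0 + z)\, j(|z|)\, dz \;\geq\; \epsilon \int_{\sB_{r/4}(y_0)} j(|w - x_0|)\, dw \;-\; \gamma \int_\Rd (1 - e^{-|z|^2})\, j(|z|)\, dz,
\]
in which the first term is a strictly positive constant independent of $\gamma$ (since $j > 0$), and the second term is finite by L\'evy-integrability of $j$ (using $1 - e^{-|z|^2} \leq |z|^2 \wedge 1$ up to a constant). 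Choosing $\gamma > 0$ small enough makes the left-hand side strictly positive, a contradiction. Hence $u > 0$ in $\cD$.

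For the Hopf bound, assume $u > 0$ in $\cD$ and fix a compact $\cK \Subset \cD$ with $\inf_\cK u =: m > 0$ by continuity. Through the viscosity-theoretic adaptation of Dynkin's formula (smoothing $u$ and passing to the limit as in the arguments of \cite[Sect.~3]{BL17a} and \cite{B18}), the super-solution property yields
\[
u(x) \;\geq\; \Exp^x\!\left[e^{\int_0^{\uptau_\cD \wedge t} c(X_s)\, ds}\, u(X_{\uptau_\cD \wedge t})\right], \qquad t > 0.
\]
Dropping the non-negative exit contribution (since $u \geq 0$ on $\cD^c$), bounding $e^{\int c} \geq e^{-\|c\|_\infty t}$, and using $u \geq m$ on $\cK$, with $t = 1$ this gives
\[
u(x) \;\geq\; m\, e^{-\|c\|_\infty}\, \int_\cK p_\cD(1, x, y)\, dy,
\]
where $p_\cD$ is the transition density of $\pro X$ killed upon exit from $\cD$. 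The sharp two-sided Dirichlet heat-kernel estimates of \cite{BGR15}, valid under \cref{A2.1} for $\cC^{1,1}$ domains, give $p_\cD(1, x, y) \gtrsim V(\delta_\cD(x))\, V(\delta_\cD(y))\, p(1, x, y)$ with $p$ the free kernel. For $y \in \cK$ both $V(\delta_\cD(y))$ and $p(1, x, y)$ are bounded below by positive constants; integrating and invoking $V(r) \asymp \phi(r)$ from \eqref{E3.5} yields $u(x) \geq \eta\, \phi(\delta_\cD(x))$ on $\cD$, as required in \eqref{ET2.2B}.

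The principal technical obstacle is the Feynman--Kac representation for viscosity (rather than classical) super-solutions: to apply Dynkin's formula one must first mollify $u$, say by inf-convolution, verify the super-solution inequality is preserved in a strong enough sense, and pass to the limit in the stochastic identity. A secondary concern is ensuring the Dirichlet heat-kernel lower bound of \cite{BGR15} fully matches our framework; the doubling properties encoded by WLSC and WUSC in \cref{A2.1} cover this, and otherwise domination by a more regular comparison kernel as in \cite{BGR14b} can be invoked.
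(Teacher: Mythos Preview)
For the strong minimum principle (first assertion), your argument is essentially the paper's: both exploit the viscosity super-solution inequality at a touching point $x_0$ with a test function that vanishes at $x_0$ and picks up the positivity of $u$ elsewhere through the non-local integral. The paper simply takes a test function that is $0$ on $\sB_\delta(x_0)$ and equals $u$ outside $\sB_{2\delta}(x_0)$, so that the inequality $\int \varphi(x_0+y)\,j(|y|)\,dy \leq 0$ forces $u\equiv 0$ as $\delta\to 0$; you build an explicit smooth test function instead. The mechanism is identical.

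For the Hopf bound (second assertion), your route is genuinely different. The paper avoids any stochastic representation of $u$: it compares $u$ directly with a small multiple of the torsion function, $v_n(x)=\frac{1}{n}\Exp^x[\uptau_\cD]$, and shows $u\geq v_n$ for large $n$ by a pure viscosity argument. Assuming the contrary, the minimum of $w_n=u-v_n$ is attained at some $x_n\in\cD$ drifting to $\partial\cD$; testing with the constant $w_n(x_n)$ and isolating the contribution of a fixed compact $\cK\Subset\cD$ in the non-local integral yields a contradiction. The conclusion then follows from the known bound $\Exp^x[\uptau_\cD]\gtrsim V(\delta_\cD(x))$ in \eqref{E3.6} together with \eqref{E3.5}.

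Your approach via Feynman--Kac plus Dirichlet heat-kernel lower bounds would also work, but it relocates all the difficulty to exactly the step you flag as the principal obstacle: obtaining the super-martingale inequality
\[
u(x)\;\geq\;\Exp^x\Bigl[e^{\int_0^{\uptau_\cD\wedge t} c(X_s)\,ds}\,u(X_{\uptau_\cD\wedge t})\Bigr]
\]
for a viscosity (not classical) super-solution. This requires an inf-convolution or mollification argument that is not short, and the references you cite do not supply it in ready-to-use form for this operator class. The paper's torsion-function comparison sidesteps the issue entirely, since $v_n$ is itself a classical solution with explicit probabilistic meaning and the only viscosity input needed is the elementary touching-from-below test at a single point. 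What your approach would buy, once the representation is justified, is a more quantitative constant (through explicit heat-kernel bounds) and a template that extends to time-dependent or more general potentials; the paper's argument buys simplicity and a fully self-contained viscosity proof.
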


\begin{proof}
With no loss of generality we may assume that $c\in\cC(\bar{\cD})$, else $c$ should be replaced by $-\sup_{\cD}|c|$.
Suppose that $u$ is not positive in $\cD$. Then there exists $x_0\in \cD$ such that $u(x_0)=0$. Consider a non-negative 
test function $\varphi\in\cC_{\rm b}(x_0)$ below $u$ such that $\varphi(y)=0$ for $y\in \sB_\delta(x_0)\subset \cD$, and 
$\varphi(y)=u(y)$ for $y\in \sB^c_{2\delta}(x_0)$, with an arbitrary $\delta > 0$. Since $u$ is a viscosity super-solution 
of \eqref{ET2.2A}, it follows that
$$
-\Psidel \varphi(x_0) + c(x_0) \varphi(x_0)\leq 0,
$$
which implies
$$
\int_{\Rd} \varphi (x_0+y) j(|y|) dy=0.
$$
Since $\delta$ is arbitrary, it follows that $u=0$ in $\Rd$, which proves the first part of the claim.

Now we prove the second part. By the given condition we have $u>0$ in $\cD$. Denote by $v_n$ the solution of
\begin{equation*}\label{ET2.2C}
-\Psidel v_n =-\frac{1}{n}\quad \text{in} \; \cD, \quad \text{and}\quad v_n=0\quad \text{in}\; \cD^c.
\end{equation*}
As it is well known, see \cite{KKLL}, $v_n(x)=\frac{1}{n}\Exp^x[\uptau_\cD]$. We claim that for a large enough $n$
we have
\begin{equation}\label{ET2.2D}
u(x)\geq v_n(x) \quad \text{for}\; x\in\Rd.
\end{equation}
Note that $w_n(x)=u(x)-v_n(x)\geq 0$ in $\cD^c$. Assume, to the contrary, that \eqref{ET2.2D} does not hold. Then
there exists a sequence $\seq x$ such that
$$
w_n(x_n)=\min_{\Rd} w_n <0,
$$
and since $v_n\to 0$ uniformly in $\Rd$, necessarily $x_n\to\partial \cD$ as $n\to\infty$. Let $\cK$ be a nonempty
compact subset of $\cD$ and denote $M=\min_{x\in \cK} u(x)>0$. Choose $n$ large enough so that $x_n\notin \cK$. Note
that $\varphi(x)=w_n(x_n)$ crosses $w_n$ from below, and by \cite[Lem.~5.8]{CS09}
$$
-\Psidel w_n \leq -c u+ \frac{1}{n}\quad \text{in}\; \cD\,,
$$
holds. Hence, by using the definition of a viscosity super-solution it is clear that
$$
\int_{x_n+z\in \cK} (w_n(x_n+y)-w_n(x_n)) j(|y|) dy \;\leq -c(x_n) u(x_n) + \frac{1}{n}\xrightarrow{n\to\infty} 0.
$$
However, $\norm{v_n}_\infty\leq \frac{1}{n} \norm{v_1}_\infty$, and therefore,
$$
\int_{x_n+y\in \cK} (w_n(x_n+y)-w_n(x_n))j(|y|) dy\;
\geq (M-\frac{1}{n} \norm{v_1}_\infty) \int_{x_n+y\in\cK} j(|y|) dy>0,
$$
as $n\to\infty$. This proves \eqref{ET2.2D}. Thus \eqref{ET2.2B} follows by a combination of \eqref{ET2.2D}, \eqref{E3.6}
and \eqref{E3.5}.

\end{proof}

\begin{remark}
{\rm
Choosing $\Psi(s)=s^{\alpha/2}$, $\alpha\in (0, 2)$, above we get back Hopf's lemma for the fractional Laplacian,
extending \cite{DQ17,FJ15,GS16} to viscosity solutions.
}
\end{remark}

\subsection{Maximum principles}

Now we turn to discussing several maximum principles for viscosity solutions.
\begin{theorem}[\textbf{Refined maximum principle}]
\label{T2.3}
Suppose that $\Psi$ satisfies the WLSC property. Let $\lambda_\cD<0$, and $v\in\cC_{\rm b}(\Rd)$ be a viscosity
sub-solution of
$$
-\Psidel v + c\, v= 0\quad \text{in}\; \cD, \quad v\leq 0\quad \text{in}\; \cD^c.
$$
Then $v\leq 0$ in $\Rd$.
\end{theorem}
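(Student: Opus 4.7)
The natural approach is a Berestycki--Nirenberg--Varadhan style sliding argument, using the principal eigenfunction $\varphi_\cD$ from \cref{T2.1} as a positive barrier, exploiting the hypothesis $\lambda_\cD<0$, and combining the narrow-domain maximum principle \cref{C3.1} with Hopf's lemma \cref{T2.2}. The plan is to introduce, for $z\geq 0$, the family
\[
w_z \df z\varphi_\cD - v,
\]
first to show that $w_z \geq 0$ in $\Rd$ for all sufficiently large $z$, and then to slide $z$ downward to zero, so that $v\leq 0$ follows. By \cite[Lem.~5.8]{CS09} applied to the equations satisfied by $\varphi_\cD$ and $v$, the function $w_z$ is a viscosity super-solution satisfying $-\Psidel w_z + c\,w_z \leq z\lambda_\cD\varphi_\cD \leq 0$ in $\cD$, while on $\cD^c$ one has $\varphi_\cD = 0$ and $v\leq 0$, so $w_z\geq 0$ there automatically.

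The first step is to show that $\sI\df\{z\geq 0: w_z\geq 0\text{ in }\Rd\}$ is non-empty. Fix a compact $\cK\Subset\cD$ with $\lvert\cD\setminus\cK\rvert<\varepsilon$, where $\varepsilon$ is as in \cref{C3.1}. Since $\varphi_\cD$ is continuous and strictly positive on $\cD$, $\min_\cK\varphi_\cD>0$, and hence $w_z\geq 0$ on $\cK$ for every $z\geq z_0\df\norm{v}_\infty/\min_\cK\varphi_\cD$. For such $z$, the function $-w_z = v - z\varphi_\cD$ is a viscosity sub-solution of $-\Psidel u + cu = 0$ on the narrow collar $\cD\setminus\cK$ (its natural right-hand side $-z\lambda_\cD\varphi_\cD$ is non-negative), and is non-positive on $(\cD\setminus\cK)^c = \cK\cup\cD^c$. \cref{C3.1} then yields $-w_z\leq 0$ on $\cD\setminus\cK$, hence $w_z\geq 0$ throughout $\Rd$, so $\sI\neq\emptyset$.

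Set $z^*\df\inf\sI$; the linearity of $z\mapsto w_z$ implies that $\sI$ is a closed upper ray, so $w_{z^*}\geq 0$ in $\Rd$. The aim is to prove $z^*=0$. Assume for contradiction that $z^*>0$. Applying \cref{T2.2} to the non-negative super-solution $w_{z^*}$, either $w_{z^*}\equiv 0$ in $\Rd$ or $w_{z^*}>0$ in $\cD$. In the first case $v = z^*\varphi_\cD$; picking any $x\in\cD$ with $\varphi_\cD(x)>0$ and a smooth test function $\varphi$ touching $v$ from above at $x$, combining the sub-solution inequality for $v$ (right-hand side zero) with the super-solution inequality for $z^*\varphi_\cD$ (right-hand side $z^*\lambda_\cD\varphi_\cD$) leads to
\[
0 \;\leq\; -\Psidel\varphi(x) + c(x)v(x) \;\leq\; z^*\lambda_\cD\varphi_\cD(x) \;<\; 0,
\]
a contradiction. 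Hence $w_{z^*}>0$ in $\cD$, so $m\df\min_\cK w_{z^*}>0$. For $\eta>0$ sufficiently small, $w_{z^*-\eta} = w_{z^*} - \eta\varphi_\cD \geq m - \eta\norm{\varphi_\cD}_\infty > 0$ on $\cK$ and $\geq 0$ on $\cD^c$; repeating the narrow-collar argument from the previous paragraph with $-w_{z^*-\eta}$ gives $w_{z^*-\eta}\geq 0$ in $\Rd$, contradicting the definition of $z^*$. Therefore $z^*=0$, and sending $z\downarrow 0$ in $v\leq z\varphi_\cD$ yields $v\leq 0$ in $\Rd$.

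The main obstacle is the very first step: without any a priori information on the boundary decay of $v$, a direct comparison $v\leq z\varphi_\cD$ could fail near $\partial\cD$, since $\varphi_\cD$ degenerates there (Hopf only gives $\varphi_\cD\gtrsim\phi(\delta_\cD)$). The decomposition of $\cD$ into a compact interior piece, where $\varphi_\cD$ is uniformly positive, together with a narrow boundary collar controlled by \cref{C3.1}, is the decisive device; the very same splitting reappears in the sliding step.
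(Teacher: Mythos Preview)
Your argument follows essentially the same Berestycki--Nirenberg--Varadhan sliding strategy as the paper, only with the reciprocal parametrization: the paper slides $w_t=\varphi_\cD - tv$ and takes $\mathfrak z=\sup\{t>0:w_t>0\text{ in }\cD\}$, while you slide $w_z=z\varphi_\cD-v$ and take $z^*=\inf\{z\ge 0:w_z\ge 0\}$. The key ingredients---the compact/narrow-collar splitting combined with \cref{C3.1}, and the dichotomy from the first part of \cref{T2.2}---are identical.

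There is one genuine slip. In the case $w_{z^*}\equiv 0$ you write
\[
0 \;\leq\; -\Psidel\varphi(x) + c(x)v(x) \;\leq\; z^*\lambda_\cD\varphi_\cD(x),
\]
with a single test function $\varphi$ touching $v$ from above. The left inequality is correct (it is the sub-solution inequality for $v$), but the middle one is not: the super-solution inequality for $z^*\varphi_\cD$ requires a test function touching from \emph{below}, and the same $\varphi$ cannot serve both roles. In fact, for any $\varphi\geq v$ with $\varphi(x)=v(x)$ one has $-\Psidel\varphi(x)\geq -\Psidel\psi(x)$ for any $\psi\leq v$ with $\psi(x)=v(x)$, so the two inequalities point in opposite directions and do not close up.

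The fix is immediate and is what the paper does implicitly. If $w_{z^*}\equiv 0$, then the zero function is a viscosity super-solution of $-\Psidel w+cw= z^*\lambda_\cD\varphi_\cD$ in $\cD$. Touching $0$ from below at any $x\in\cD$ by $\varphi_\epsilon(y)=-\epsilon\,\chi(y)$ with $\chi\in\cC^2_{\rm b}(\Rd)$, $\chi(x)=0$, $\chi>0$ elsewhere, gives $-\Psidel\varphi_\epsilon(x)\leq z^*\lambda_\cD\varphi_\cD(x)<0$; letting $\epsilon\to 0$ forces $0\leq z^*\lambda_\cD\varphi_\cD(x)<0$, a contradiction. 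With this correction, your proof is complete and matches the paper's.
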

\begin{proof}
Define
$$
\mathfrak{z}=\sup\{t>0\; :\; \varphi_\cD-t v>0\quad \text{in}\, \cD\}.
$$
Note that $\mathfrak{z}>0$. Indeed, fix a compact $\cK\Subset\cD$ such that $|\cK^c\cap\cD|<\varepsilon$,
where $\varepsilon$ is the same as in Corollary~\ref{C3.1}. Then choose $t>0$ small enough so that
$w_t=\varphi_\cD-t v$ is positive in $\cK$. Since
$$-\Psidel w_t + c\, w_t\leq \lambda_\cD \varphi_\cD<0\quad \text{in} \; D\setminus\cK,
\quad w_t\geq 0 \quad \text{in}\; (D\setminus\cK)^c,$$
we get from Corollary~\ref{C3.1} and Theorem~\ref{T2.2} that $w_t>0$ in $\cD$. Thus $\mathfrak{z}\geq t>0$.
Now suppose, to the contrary, that $v(x_0)>0$ for some $x_0\in\cD$. Then we would get
$\mathfrak{z}<\infty$. Note that
\begin{equation}\label{ET2.3A}
-\Psidel w_\mathfrak{z} + c\, w_\mathfrak{z}\leq \lambda_\cD \varphi_\cD<0\quad \text{in} \; D,
\quad w_\mathfrak{z}\geq 0 \quad \text{in}\; D^c.
\end{equation}
Using Theorem~\ref{T2.2}, we have either $w_\mathfrak{z}=0$ in $\Rd$ or $w_\mathfrak{z}>0$ in $\cD$. In view
of \eqref{ET2.3A}, the case $w_\mathfrak{z}=0$ in $\Rd$ cannot occur. Again, if $w_\mathfrak{z}>0$ in $\cD$,
by repeating the above argument we can show that $\delta>0$ exists, satisfying $w_{\mathfrak{z}+\delta}>0$
in $\cD$ (see Theorem~\ref{T2.1} for a similar argument). This contradicts the definition of $\mathfrak{z}$.
Hence no such $x_0$ exists, implying $v\leq 0$ in $\Rd$.
\end{proof}

To prove our next main theorem below, we need the following result in the spirit of \cite[Th.~1.2]{B18b}.
\begin{lemma}\label{T3.2}
Let $u\in\cC_{\rm b}(\Rd)$ be a viscosity solution of
\begin{equation}\label{ET3.2A}
-\Psidel u + (c-\lambda_\cD)u=0\quad \text{in}\; \cD, \quad \text{and}\quad u=0\quad \text{in}\; \cD^c,
\end{equation}
or of
\begin{equation}\label{ET3.2B}
-\Psidel u + (c-\lambda_\cD)u\geq 0\quad \text{in}\; \cD,
\quad \text{and} \quad u\leq 0\quad \text{in}\; \cD^c,\quad u(x_0)>0
\end{equation}
for an $x_0\in \cD$. Then we have $u=z\varphi_\cD$ for some $z\in\RR$, where $\varphi_\cD$ is given by \eqref{ET2.1A}.
\end{lemma}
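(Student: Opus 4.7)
The plan is to reduce case (a) to case (b) by linearity and then to prove case (b) by a sliding argument closely paralleling the uniqueness portion of \cref{T2.1}. In case (a), if $u \equiv 0$ take $z = 0$; otherwise $u(x_0) \neq 0$ at some $x_0 \in \cD$ and, since the equation and the exterior condition $u = 0$ in $\cD^c$ are invariant under $u \mapsto -u$, we may assume $u(x_0) > 0$, upon which $u \leq 0$ in $\cD^c$ is automatic and the hypotheses of case (b) are met.

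For case (b), set $w_t = \varphi_\cD - t u$ for $t > 0$. Since $\varphi_\cD$ is a viscosity solution and $u$ a viscosity sub-solution of $-\Psidel v + (c - \lambda_\cD) v = 0$ in $\cD$, the combination lemma from \cite[Lem.~5.8]{CS09} (as used in the proof of \cref{T2.1}) shows that $w_t$ is a viscosity super-solution of this equation in $\cD$, while $w_t = -t u \geq 0$ on $\cD^c$. Define
$$\mathfrak{z} = \sup\{t > 0 \,:\, w_t > 0 \text{ in } \cD\};$$
since $u(x_0) > 0$, $\mathfrak{z}$ is bounded above by $\varphi_\cD(x_0)/u(x_0)$ and hence finite.

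The first nontrivial step is to show $\mathfrak{z} > 0$. Fix a compact $\cK \Subset \cD$ with $|\cD \setminus \cK|$ smaller than the narrow-domain threshold $\varepsilon$ from \cref{C3.1}, applied with potential $c - \lambda_\cD$. For sufficiently small $t > 0$, positivity of $\varphi_\cD$ on $\cK$ and boundedness of $u$ give $w_t > 0$ on $\cK$, so that $-w_t \leq 0$ on $\cD^c \cup \cK = (\cD \setminus \cK)^c$. \Cref{C3.1} applied to the viscosity sub-solution $-w_t$ on the narrow domain $\cD \setminus \cK$ then gives $-w_t \leq 0$ in $\cD \setminus \cK$, whence $w_t \geq 0$ in $\Rd$. \Cref{T2.2} applied to the non-negative super-solution $w_t$ forces either $w_t \equiv 0$ in $\Rd$ (ruled out by $w_t > 0$ on $\cK$) or $w_t > 0$ in $\cD$, so $\mathfrak{z} \geq t > 0$.

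At $t = \mathfrak{z}$, continuity yields $w_\mathfrak{z} \geq 0$ in $\Rd$, and \cref{T2.2} produces the same dichotomy. If $w_\mathfrak{z} \equiv 0$ in $\Rd$, then $u = \mathfrak{z}^{-1} \varphi_\cD$, concluding the proof with $z = \mathfrak{z}^{-1} > 0$. Otherwise $w_\mathfrak{z} > 0$ in $\cD$, so $\min_\cK w_\mathfrak{z} > 0$ by continuity on the compact $\cK$, and repeating the previous paragraph with $\mathfrak{z} + \delta$ in place of $t$ for small $\delta > 0$ produces $w_{\mathfrak{z} + \delta} > 0$ in $\cD$, contradicting the definition of $\mathfrak{z}$. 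The main technical delicacy is the viscosity-level justification that $w_t$ inherits the super-solution property from the combination of $\varphi_\cD$ and $u$; this is handled in exactly the same way as in \cref{T2.1}, and the rest of the argument is essentially bookkeeping.
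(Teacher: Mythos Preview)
Your proof is correct and follows essentially the same approach as the paper's: the same sliding argument with $w_t=\varphi_\cD-tu$, the same appeal to \cite[Lem.~5.8]{CS09} for the super-solution property of $w_t$, and the same use of \cref{C3.1} and \cref{T2.2} to establish $\mathfrak{z}>0$ and rule out $w_\mathfrak{z}>0$ in $\cD$. The paper treats case \eqref{ET3.2A} as ``analogous'' rather than explicitly reducing it to \eqref{ET3.2B} as you do, but this is a cosmetic difference; your write-up is in fact more detailed than the paper's, which largely defers to the arguments of \cref{T2.1}.
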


\begin{proof}
We provide a proof considering \eqref{ET3.2B}, while the proof for \eqref{ET3.2A} is analogous. We follow a similar line
of argument as in the proof of \cref{T2.1}. Fix a compact set $\cK\Subset \cD$ such that $|\cK^c\cap \cD|<\varepsilon$,
where $\varepsilon$ is the same as in \cref{C3.1}. Denote $w_z=\varphi_\cD-z u$, for $z>0$. Then $w_z\geq 0$ in $\cD^c$.
Let
$$
\mathfrak{z}=\sup\{z>0\; :\; w_z > 0 \; \text{in}\; \cD\}.
$$
Note that $\mathfrak{z}$ is finite, which follows from the fact that $u(x_0)>0$. As before, we claim that $\mathfrak{z}>0$.
Indeed, note that by \cite[Lem.~5.8]{CS09} we have for every $z>0$
\begin{equation*}
-\Psidel w_z +  (c-\lambda_\cD) w_z\leq 0 \quad \text{in}\; \cD.
\end{equation*}
Then by using a similar argument as in \cref{T2.1}, it is easily seen that $\mathfrak{z}>0$. Note that by \cref{T2.2} either
$w_\mathfrak{z}=0$ in $\Rd$ or $w_\mathfrak{z}>0$ in $\cD$ holds. If $w_\mathfrak{z}>0$, then by following the arguments of
\cref{T2.1} we get a contradiction. Thus $w_\mathfrak{z}=0$ in $\Rd$ and this completes the proof.
\end{proof}

The following result establishes an anti-maximum principle.
\begin{theorem}[\textbf{Anti-maximum principle}]
\label{T2.4}
Let Assumptions \ref{A2.1}-\ref{A2.2} hold, and $f\in \cC(\bar \cD)$ be such that $f\lneq 0$. Then there exists
$\delta>0$ such that for every $\lambda\in (\lambda_\cD-\delta, \lambda_\cD)$, the solution of
\begin{equation*}\label{ET2.4A}
-\Psidel u + (c-\lambda)u=f \quad \text{in}\; \cD, \quad \text{and}\quad u=0\quad \text{in}\; \cD^c,
\end{equation*}
satisfies $u<0$ in $\cD$.
\end{theorem}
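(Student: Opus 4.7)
The plan is to argue by contradiction using the spectral decomposition of $u_n$ along $\varphi_\cD$ and its $L^2(\cD)$-orthogonal complement. Assume the conclusion fails: there exist $\lambda_n \uparrow \lambda_\cD$ and viscosity solutions $u_n$ of the Dirichlet problem with $u_n(\tilde x_n)\geq 0$ for some $\tilde x_n\in\cD$. Because $\lambda_\cD$ is an isolated eigenvalue of $L = -\Psidel + c$ by \cref{T3.3}, and $L$ with Dirichlet exterior condition is self-adjoint as the Friedrichs extension of Section~2.2, for $\delta$ small the operator $L-\lambda_n$ is invertible on $L^2(\cD)$; by viscosity comparison (\cite[Th.~3.8]{KKLL}), $u_n$ coincides with the unique $L^2$-solution.

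Writing $u_n=\alpha_n\varphi_\cD + w_n$ with $\langle w_n,\varphi_\cD\rangle_{L^2}=0$, projection of $(L-\lambda_n)u_n=f$ onto $\varphi_\cD$ using self-adjointness yields
\begin{equation*}
\alpha_n \;=\; \frac{\langle f,\varphi_\cD\rangle}{(\lambda_\cD-\lambda_n)\,\|\varphi_\cD\|_{L^2}^2}.
\end{equation*}
Since $f\lneq 0$ and $\varphi_\cD>0$ in $\cD$, the numerator is strictly negative while $\lambda_\cD-\lambda_n>0$, so $\alpha_n<0$ and $|\alpha_n|\to\infty$. The transverse part $w_n$ solves $(L-\lambda_n)w_n=Qf$ on $\varphi_\cD^\perp$, where $Q$ is the orthogonal projection onto $\varphi_\cD^\perp$. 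The spectral gap from \cref{T3.3} gives that $(L-\lambda_n)^{-1}$ on $\varphi_\cD^\perp$ is bounded uniformly in $n$, producing $\|w_n\|_{L^2}\leq C_1$, which upgrades to a uniform $L^\infty$ bound by bootstrapping with the Feynman-Kac semigroup introduced in Section~2.2.

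The technical core is to show the uniform boundary decay $|w_n(x)|\leq C_2\,\phi(\delta_\cD(x))$ for $x\in\cD$. I would prove this by a barrier argument using $\phi(\delta_\cD)$ as a supersolution-type barrier for $L-\lambda_n$, invoking the Green function estimates for subordinate Brownian motions from \cite{BGR15,KSV} together with the viscosity comparison principle of \cite[Th.~3.8]{KKLL}. Combined with the Hopf lower bound $\varphi_\cD(x)\geq \eta\,\phi(\delta_\cD(x))$ from \cref{T2.2}, this yields $|w_n(x)|\leq (C_2/\eta)\varphi_\cD(x)$ uniformly in $n$. Consequently,
\begin{equation*}
u_n(x) \;=\; \alpha_n\varphi_\cD(x)+w_n(x) \;\leq\; \bigl(\alpha_n+C_2/\eta\bigr)\varphi_\cD(x)\;<\;0 \quad\text{in}\;\cD,
\end{equation*}
as soon as $|\alpha_n|>C_2/\eta$, which holds for all $n$ large enough, contradicting $u_n(\tilde x_n)\geq 0$. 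The main obstacle is precisely this converse-Hopf estimate on $w_n$: one must rule out any growth of $w_n$ beyond the natural boundary-decay rate $\phi(\delta_\cD)$, and this is where the $\cC^{1,1}$ regularity of $\partial\cD$ together with \cref{A2.1,A2.2} are essential, as they guarantee the availability of two-sided boundary estimates and suitable non-local barriers for $-\Psidel$.
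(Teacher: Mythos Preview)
Your approach is genuinely different from the paper's and is in the spirit of the classical Cl\'ement--Peletier argument: you decompose $u_n=\alpha_n\varphi_\cD+w_n$ in $L^2(\cD)$, compute $\alpha_n$ explicitly, and control $w_n$ uniformly up to the boundary. The paper instead stays entirely in the viscosity framework and argues by compactness: assuming $u_n\nless 0$, it distinguishes the cases $\limsup\|u_n\|_\infty<\infty$ and $=\infty$, uses the H\"older regularity of \cite[Th.~1.1]{KKLL} to extract a limit, and then invokes \cref{T3.2} together with the boundary convergence of \cite[Th.~1.2]{KKLL} to force the limit to be a nonzero multiple of $\varphi_\cD$, which contradicts either $f\neq 0$ or the definition of $\lambda_\cD$. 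The paper's route avoids any $L^2$ spectral theory; yours is more quantitative and closer to the self-adjoint tradition.

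There are, however, two structural issues you should address. First, you invoke \cref{T3.3} to obtain the spectral gap needed for the uniform $L^2$ bound on $w_n$, but in the paper \cref{T3.3} is proved \emph{after} \cref{T2.4} and explicitly recycles its arguments; as written, your ordering is circular. You would need to establish the isolation of $\lambda_\cD$ independently (which is possible, but is essentially the same work). Second, your argument freely passes between the viscosity framework and $L^2$ spectral theory: you need that the viscosity principal pair $(\lambda_\cD,\varphi_\cD)$ of \cref{T2.1} coincides with the bottom of the $L^2$ spectrum of the Friedrichs extension, and that the viscosity solution $u_n$ agrees with the $L^2$ resolvent applied to $f$. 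Both are plausible here since $c$ is real and bounded, but neither is proved in the paper and neither follows from \cite[Th.~3.8]{KKLL} alone. Finally, your ``technical core'' boundary estimate $|w_n|\leq C_2\,\phi(\delta_\cD)$ need not be obtained by an ad hoc barrier: once you know $w_n$ solves $-\Psidel w_n+(c-\lambda_n)w_n=Qf$ with $w_n=0$ in $\cD^c$ and $Qf$ fixed and bounded, the estimate is exactly the content of \cite[Th.~1.2]{KKLL}, which is also the tool the paper uses in its Step~2.
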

\begin{proof}
We prove the theorem by assuming, to the contrary, that no such $\delta$ exists. Then there exists a sequence
$(u_n, \lambda_n)_{n\in\mathbb N}\subset \cC_{\rm b}(\Rd)\times\RR$ such that $u_n\nless 0$, $\lambda_n\to\lambda_\cD$ as
$n\to\infty$, and
\begin{equation}\label{ET2.4B}
-\Psidel u_n + (c-\lambda_n)u_n=f \quad \text{in}\; \cD, \quad \text{and}\quad u_n=0\quad \text{in}\; \cD^c.
\end{equation}
Note that $\liminf_{n\to\infty} \norm{u_n}_\infty>0$; otherwise, we can pass to the limit and obtain $0$ for a viscosity
solution of \eqref{ET2.4B}, contradicting the fact that $f\neq 0$. We split the proof in two steps.

\medskip
\noindent
\emph{Step 1}: Suppose that $\limsup_{n\to\infty} \norm{u_n}_\infty<\infty$. Using \cite[Th.~1.1]{KKLL} it follows that
$$
\sup_{n\in\mathbb N}\, \sup_{x, y\in \cD}\abs{u_n(x)-u_n(y)}\leq \kappa_1 \phi (\abs{x-y}),
$$
for a constant $\kappa_1$, where $\phi(r)= 1/\sqrt{\Psi(1/r^{2})}$. Thus $\seq u$ is equicontinuous and has a
subsequence convergent to a limit $u\neq 0$, which is a solution of
\begin{equation}\label{ET2.4C}
-\Psidel u + (c-\lambda_\cD)u=f \quad \text{in}\; \cD, \quad \text{and}\quad u=0\quad \text{in}\; \cD^c.
\end{equation}
If $u(x_0)<0$ for some $x_0\in \cD$, then it follows from \eqref{ET2.4C} and \cref{T3.2} that $u=z\varphi_\cD$ for some
$z<0$ (as $\varphi_\cD>0$ in $\cD$), contradicting the fact that $f\neq 0$. Thus we have $u\geq 0$ in $\cD$, and by \cref{T2.2}, $u>0$ in $\cD$. Then
the proof of \cref{T2.1} implies that $u=z\varphi_\cD$, again in contradiction with $f\neq 0$.

\medskip
\noindent
\emph{Step 2}: Suppose that $\limsup_{n\to\infty} \norm{u_n}_\infty=\infty$. Define $v_n=u_n/\norm{u_n}_\infty$.
By using a similar argument as in the previous step, we find a subsequence $v_{n_k}\to v\neq 0$ satisfying
\begin{equation}\label{ET2.4D}
-\Psidel v + (c-\lambda_\cD)v=0 \quad \text{in}\; \cD, \quad \text{and}\quad u=0\quad \text{in}\; \cD^c.
\end{equation}
Using \eqref{ET2.4D} and \cref{T3.2} we have $v=z\varphi_\cD$, for some $z\neq 0$. Recalling the renewal function from
\eqref{E3.5} and using \cite[Th.~1.2]{KKLL}, we have
$$
\sup_{x\in \cD}\left|\frac{v_{n_k}(x)}{V(\delta_\cD(x))} -\frac{v(x)}{V(\delta_\cD(x))}\right|\to 0,
\quad \text{as}\;\; n_k\to\infty.
$$
By \cref{T2.2} and \eqref{E3.5} we know that
$$
\inf_{x\in \cD}\frac{\varphi_\cD(x)}{V(\delta_\cD(x))}\geq \eta>0,
$$
and therefore, the above estimates show that
$$
\inf_{x\in \cD}\frac{v_{n_k}(x)}{V(\delta_\cD(x))}\geq \frac{\eta}{2}\, z, \quad \text{if}\;\; z>0,
\quad\text{or}\quad \sup_{x\in \cD}\frac{v_{n_k}(x)}{V(\delta_\cD(x))}\leq \frac{\eta}{2}\, z, \quad \text{if}\;\; z<0.
$$
Note that the second possibility contradicts our hypothesis on the sequence $u_{n_k}\nless 0$, as if the first one were the case,
then $u_{n_k}\in \cC_{\rm b, +}(\Rd)$ and
$$
-\Psidel u_{n_k} + (c-\lambda_{n_k}) u_{n_k}\leq 0, \quad \lambda_{n_k}<\lambda_\cD,
$$
would follow, which is incompatible with the definition of $\lambda_\cD$.
\end{proof}

The arguments of the above proof also give the following result on the principal eigenvalue being isolated.
\begin{theorem}\label{T3.3}
Let Assumptions \ref{A2.1}-\ref{A2.2} hold. Then there exists $\varepsilon>0$ such that there is no non-trivial
solution of
\begin{equation}\label{ET3.3A}
-\Psidel u + c\, u = \mu \, u\quad \text{in}\; \cD, \quad \text{and}\quad u=0\quad \text{in}\; \cD^c,
\end{equation}
for $\mu\in (\lambda_\cD- \varepsilon, \infty)\setminus\{\lambda_\cD\}$.
\end{theorem}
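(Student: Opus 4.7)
The strategy is to argue by contradiction along the lines of Step 2 in the proof of \cref{T2.4}. Assume no such $\varepsilon$ exists, so that there exist $\mu_n \to \lambda_\cD$ with $\mu_n \neq \lambda_\cD$, together with non-trivial $u_n \in \cC_{\rm b}(\Rd)$ solving \eqref{ET3.3A}. Normalize $\|u_n\|_\infty = 1$. The modulus-of-continuity bound of \cite[Th.~1.1]{KKLL} makes $(u_n)$ equicontinuous and uniformly bounded, so after extracting a subsequence $u_n \to u$ uniformly on $\Rd$, with $\|u\|_\infty = 1$, $u \equiv 0$ on $\cD^c$, and $u$ a viscosity solution of $-\Psidel u + (c - \lambda_\cD)u = 0$ in $\cD$. \cref{T3.2} then forces $u = z\varphi_\cD$ for some $z \neq 0$; replacing $u_n$ by $-u_n$ if needed, we take $z > 0$.

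\textbf{Boundary-uniform positivity and contradiction.} The sharper boundary comparison \cite[Th.~1.2]{KKLL} yields
\[
\sup_{x \in \cD}\Bigl|\frac{u_n(x)}{V(\delta_\cD(x))} - \frac{z\varphi_\cD(x)}{V(\delta_\cD(x))}\Bigr| \;\longrightarrow\; 0, \quad n\to\infty,
\]
and combining \cref{T2.2} with \eqref{E3.5} gives $\inf_\cD \varphi_\cD/V(\delta_\cD) \geq \eta > 0$. For $n$ large enough this forces $u_n(x) \geq (\eta z/2)\,V(\delta_\cD(x)) > 0$ for every $x \in \cD$, so eventually $u_n \in \cC_{{\rm b}, +}(\cD)$. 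Now split cases. If $\mu_n < \lambda_\cD$, then $u_n$ is a positive supersolution at level $\mu_n$ (since $-\Psidel u_n + c u_n = \mu_n u_n \leq \mu_n u_n$), contradicting the infimum definition $\lambda_\cD \leq \mu_n$. If $\mu_n > \lambda_\cD$, then $u_n$ is itself a positive eigenfunction at level $\mu_n$, and the comparison argument from the first part of the proof of \cref{T2.1}, run with $\psi^* = u_n$, $\lambda = \mu_n$, $\psi = \varphi_\cD$, and $\gamma = \lambda_\cD$ (so that $\gamma < \lambda$), produces the contradiction.

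\textbf{Main obstacle.} The delicate step is the boundary-uniform positivity: on any $\cK \Subset \cD$, positivity of $u_n$ is immediate from uniform convergence together with $\inf_\cK z\varphi_\cD > 0$, but near $\partial \cD$ both $u_n$ and $z\varphi_\cD$ vanish and uniform $L^\infty$-control carries no information there. The Hopf lower bound \cref{T2.2} and the matching $V(\delta_\cD)$-rate boundary regularity of \cite[Th.~1.2]{KKLL}---both depending on Assumptions~\ref{A2.1}--\ref{A2.2}---are precisely what transfer positivity of the limit down to $u_n$ uniformly up to $\partial\cD$, and hence allow one to invoke the characterisation of $\lambda_\cD$ via positive (super)solutions in the final dichotomy.
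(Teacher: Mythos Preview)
Your contradiction setup has a logical gap. The negation of the statement is: for every $\varepsilon > 0$ there exists $\mu \in (\lambda_\cD - \varepsilon, \infty) \setminus \{\lambda_\cD\}$ admitting a nontrivial solution. Since the interval is unbounded above, this does \emph{not} force $\mu_n \to \lambda_\cD$; a single fixed eigenvalue $\mu_0 > \lambda_\cD$ would already lie in $(\lambda_\cD - \varepsilon, \infty)$ for every $\varepsilon$. Your treatment of the case $\mu_n > \lambda_\cD$ relies on the positivity of $u_n$, which you extract from the limiting procedure---but that procedure already presupposes $\mu_n \to \lambda_\cD$, so the argument is circular for $\mu$ bounded away from $\lambda_\cD$ from above.

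The paper closes this gap by first disposing of the half-line $\mu > \lambda_\cD$ directly and globally: the principal eigenvalue of $-\Psidel + (c - \mu)$ equals $\lambda_\cD - \mu < 0$, so \cref{T2.3} applied to both $u$ and $-u$ forces $u \equiv 0$. With that range eliminated, the negation genuinely yields $\mu_n \uparrow \lambda_\cD$, and from there your argument---normalization, equicontinuity via \cite[Th.~1.1]{KKLL}, identification of the limit as $z\varphi_\cD$ via \cref{T3.2}, boundary-uniform positivity via \cite[Th.~1.2]{KKLL} combined with \cref{T2.2}, and the final contradiction with the definition of $\lambda_\cD$---coincides with the paper's and is correct.
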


\begin{proof}
Suppose $\mu>\lambda_\cD$. Then the principal eigenvalue of the operator is $-\Psidel + (c-\mu)$ is negative. Hence,
by \cref{T2.3} the Dirichlet problem \eqref{ET3.3A} cannot have any solution other than $0$. Thus we consider $\mu<
\lambda_\cD$ and suppose that no such $\varepsilon$ exists. Then there exists a sequence $(u_n, \mu_n)_{n\in\mathbb N}$
of non-zero solutions such that $\mu_n \uparrow \lambda_\cD$ and
\begin{equation}\label{ET3.3B}
-\Psidel u_n + c\, u_n = \mu_n \, u_n\quad \text{in}\; \cD, \quad \text{and}\quad u_n=0\quad \text{in}\; \cD^c.
\end{equation}
Following the arguments of \cref{T2.4} and using \eqref{ET3.3B}, we see that there exists $u\in\cC_{\rm b}(\Rd)$ with
$\norm{u}_\infty=1$ satisfying
\begin{equation*}
-\Psidel u + c\, u = \lambda_\cD \, u\quad \text{in}\; \cD, \quad \text{and}\quad u=0\quad \text{in}\; \cD^c.
\end{equation*}
As before, necessarily we have that $u=z\varphi_\cD$ for some $z\neq 0$. Applying the arguments of \cref{T2.4} again,
we can show that some of the $u_n$ in \eqref{ET3.3B} are positive in $\cD$, contradicting the definition of $\lambda_\cD$.
\end{proof}

\section{\bf Applications}
\subsection{Rotational symmetry of positive solutions}
In classical PDE theory maximum principles proved to be useful in establishing symmetry properties of solutions. Next we
show that our narrow domain maximum principle \cref{C3.1} can be used to establish radial symmetry of the positive solutions
in rotationally symmetric domains. The main result of this section is the following.

\begin{theorem}\label{T3.4}
Let $\Psi$ satisfy the WLSC property with parameters $(\underline{\mu}, \underline{c}, \underline{\theta})$. Suppose that $\cD$
is convex in the direction of the $x_1$ axis, and symmetric about
the plane $\{x_1=0\}$. Also, let $f:[0, \infty)\to\RR$ be locally Lipschitz continuous, and $g:\cD\to\RR$ be a
symmetric function with respect to $x_1=0$ and decreasing in the $x_1$ direction. Consider a solution of
\begin{equation}\label{ET3.4A}
\left\{\begin{array}{lll}
-\Psidel u = f(u) - g(x) \quad \text{in}\; \cD,
\\[2mm]
\hspace{1.5cm} u > 0 \quad \text{in}\; \cD,
\\[2mm]
\hspace{1.5cm} u =0 \quad \text{in}\; \cD^c.
\end{array}
\right.
\end{equation}
Then $u$ is symmetric with respect to $x_1=0$ and strictly decreasing in the $x_1$ direction.
\end{theorem}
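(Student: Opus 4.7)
The plan is to apply the classical method of moving planes, adapted to the nonlocal setting via the maximum principles established earlier in this paper. For $\lambda\in(-M,0]$ with $M=\sup_{x\in\cD}|x_1|$, I introduce the hyperplane $T_\lambda=\{x_1=\lambda\}$, the reflection $x^\lambda=(2\lambda-x_1,x_2,\ldots,x_d)$, the cap $\Sigma_\lambda=\{x\in\cD\colon x_1<\lambda\}$, and the antisymmetric function
\begin{equation*}
w_\lambda(x) \;\df\; u(x^\lambda)-u(x), \qquad x\in\Rd,
\end{equation*}
which satisfies $w_\lambda(x^\lambda)=-w_\lambda(x)$. The goal is to establish $w_\lambda\geq 0$ in $\Sigma_\lambda$ for every $\lambda\in(-M,0]$; combined with the symmetric argument sliding planes inward from the right, this yields $u(\,\cdot^0)=u$, i.e., symmetry about $\{x_1=0\}$. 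Strict monotonicity on each half then follows by applying \cref{T2.2} to $w_\lambda$ inside each cap for $\lambda<0$.

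Since $-\Psidel$ is invariant under reflections and $f$ is locally Lipschitz, a direct computation in $\Sigma_\lambda$ gives
\begin{equation*}
-\Psidel w_\lambda(x) - c_\lambda(x)\,w_\lambda(x) \;=\; g(x)-g(x^\lambda) \;\leq\; 0,
\end{equation*}
where $c_\lambda(x)=\bigl(f(u(x^\lambda))-f(u(x))\bigr)/w_\lambda(x)$ (set to $0$ if $w_\lambda(x)=0$) is bounded by the Lipschitz constant of $f$ on $[0,\norm{u}_\infty]$, and the sign of the right-hand side uses $|x_1^\lambda|\leq|x_1|$ for $x\in\Sigma_\lambda$ with $\lambda\leq 0$ together with the symmetric decreasing nature of $g$. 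Additionally, on $\{x_1<\lambda\}\setminus\cD$ one has $w_\lambda\geq 0$, since $u\geq 0$ and $u=0$ in $\cD^c$, so any negative values of $w_\lambda$ in the half-space $\{x_1<\lambda\}$ must occur inside $\Sigma_\lambda$. For $\lambda$ close to $-M$ the cap $\Sigma_\lambda$ has arbitrarily small measure, and an antisymmetric variant of the narrow-domain principle (discussed below) applied to $w_\lambda^-$ yields $w_\lambda\geq 0$ in $\Sigma_\lambda$. Setting
\begin{equation*}
\lambda_0 \;\df\; \sup\bigl\{\lambda\in(-M,0]\colon w_\mu\geq 0\text{ in }\Sigma_\mu\text{ for every }\mu\in(-M,\lambda]\bigr\},
\end{equation*}
the starting step gives $\lambda_0>-M$, and continuity in $\lambda$ (via the continuity of $u$) yields $w_{\lambda_0}\geq 0$ in $\Sigma_{\lambda_0}$. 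I then argue by contradiction that $\lambda_0=0$: if $\lambda_0<0$, an antisymmetric strong maximum principle forces either $w_{\lambda_0}\equiv 0$ on $\Rd$ (which would give $g(x)=g(x^{\lambda_0})$ on $\Sigma_{\lambda_0}$, incompatible with the strict symmetric decreasing behaviour of $g$ when $\lambda_0<0$) or $w_{\lambda_0}>0$ in $\Sigma_{\lambda_0}$. In the latter case I pick a compact $K\Subset\Sigma_{\lambda_0}$ with $|\Sigma_{\lambda_0}\setminus K|$ less than half the narrow-domain threshold $\varepsilon$ of \cref{C3.1}; compactness gives $w_{\lambda_0}\geq\delta>0$ on $K$, continuity in $\lambda$ extends this to $w_{\lambda_0+\varepsilon_0}\geq\delta/2>0$ on $K$ for small $\varepsilon_0>0$, and the antisymmetric narrow-domain principle applied on $\Sigma_{\lambda_0+\varepsilon_0}\setminus K$ then yields $w_{\lambda_0+\varepsilon_0}\geq 0$ throughout $\Sigma_{\lambda_0+\varepsilon_0}$, contradicting the maximality of $\lambda_0$.

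The main obstacle is the antisymmetric variants of the narrow-domain maximum principle and of Hopf's lemma invoked above, since \cref{C3.1} and \cref{T2.2} as stated require the relevant function to have a sign on all of $\cD^c$ (or $\Rd$), whereas $w_\lambda$ only satisfies the pointwise antisymmetry $w_\lambda(x^\lambda)=-w_\lambda(x)$ and in particular takes values of the opposite sign inside the mirror cap $\Sigma_\lambda^\ast\df(\Sigma_\lambda)^\lambda\subset\cD$. I expect to obtain these variants by splitting the nonlocal operator at a putative negative interior minimum $x_0\in\Sigma_\lambda$ of $w_\lambda$ as
\begin{equation*}
-\Psidel w_\lambda(x_0) \,=\, \int_{\Sigma_\lambda}\!\bigl(w_\lambda(y)-w_\lambda(x_0)\bigr)\bigl[j(|y-x_0|)-j(|y-x_0^\lambda|)\bigr]\D{y} \;-\; w_\lambda(x_0)\!\int_{\Sigma_\lambda}\!\bigl[j(|y-x_0|)+j(|y-x_0^\lambda|)\bigr]\D{y},
\end{equation*}
exploiting that $j$ is radially decreasing (so the first bracket is nonnegative whenever $x_0$ and $y$ lie on the same side of $T_\lambda$) and that $w_\lambda(y)\geq w_\lambda(x_0)$ on $\Sigma_\lambda$, then reprising the ABP scheme of \cref{T3.1} restricted to the cap. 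Once the antisymmetric narrow-domain estimate and strong maximum principle are available in this form, the moving-planes scheme above proceeds essentially as in the local case.
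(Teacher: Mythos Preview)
Your overall scheme is the same moving-planes argument as the paper's, and the outcome is correct, but the technical device you propose for the narrow-domain step is genuinely different from what the paper does. The paper does \emph{not} prove separate ``antisymmetric'' maximum principles. Instead, following Felmer--Wang, it truncates: setting $\Sigma_\lambda^-=\{x\in\Sigma_\lambda:w_\lambda<0\}$ and
\[
v_\lambda=\begin{cases} w_\lambda & \text{in }\Sigma_\lambda^-,\\ 0 & \text{elsewhere},\end{cases}
\]
one checks (this is the only nontrivial point, and it uses precisely the monotonicity of $j$ that you identified) that $v_\lambda$ is a viscosity subsolution of $-\Psidel v_\lambda - c_\lambda v_\lambda\leq 0$ in $\Sigma_\lambda^-$ with $v_\lambda\leq 0$ in $(\Sigma_\lambda^-)^c$. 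Since $v_\lambda$ now has a global sign, \cref{C3.1} applies \emph{verbatim}, and no new antisymmetric ABP estimate is needed. This is more economical than your route, which would require redoing the ABP machinery on the cap; your approach is closer to the Jarohs--Weth/Chen--Li--Li style and is also viable, but buys nothing extra here.

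Two concrete issues in your write-up. First, the displayed identity for $-\Psidel w_\lambda(x_0)$ is not correct as stated: the integration must be over the half-space $\{x_1<\lambda\}$, not over the cap $\Sigma_\lambda$, and after the reflection change of variables the zeroth-order term is $-2w_\lambda(x_0)\int_{\{x_1<\lambda\}} j(|y-x_0^\lambda|)\,\D y$, not the symmetrized expression you wrote (which would be the full mass of $j$ and is typically infinite). Second, your way of excluding $w_{\lambda_0}\equiv 0$ relies on $g$ being \emph{strictly} decreasing, which is not assumed. The paper instead rules this out via the exterior datum: for $\lambda_0<0$ and $x\in\partial\cD$ with $x_1<\lambda_0$, convexity and symmetry of $\cD$ force $x^{\lambda_0}\in\cD$, so $w_{\lambda_0}(x)=u(x^{\lambda_0})>0$. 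You should use this argument in place of the one invoking $g$.
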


\begin{proof}
Part of the proof is standard and we only sketch the main steps involved; for notations and some details we keep to
\cite[Th.~1.1]{FW14}. Define
\begin{align*}
\Sigma_\lambda = \{x=(x_1, x')\in \cD\; :\; x_1>\lambda\}
\quad & \mbox{and} \quad T_\lambda = \{x=(x_1, x')\in \Rd\; :\; x_1=\lambda\}, \\
u_\lambda(x)=u(x_\lambda)\quad & \text{and}\quad w_\lambda= u_\lambda(x)-u(x),
\end{align*}
where $x_\lambda=(2\lambda-x_1, x')$. For a set $A$ we denote by $\sR_\lambda A$ the reflection of $A$ with respect
to the plane $T_\lambda$. Also, define
$$
\lambda_{\max}=\sup\{\lambda>0 \; :\; \Sigma_\lambda\neq \emptyset\}.
$$
We note that for any $\lambda \in (0, \lambda_{\max})$, $u_\lambda$ is a viscosity solution of
$$
-\Psidel u_\lambda = f(u_\lambda) - g(x_\lambda) \quad \text{in}\; \Sigma_\lambda,
$$
and thus by \cite[Lem.~5.8]{CS09} we obtain from \eqref{ET3.4A} that
\begin{equation}\label{ET3.4B}
-\Psidel w_\lambda = f(u_\lambda)-f(u) + g(x) - g(x_\lambda)\quad \text{in}\; \Sigma_\lambda.
\end{equation}
Define $\Sigma^-_\lambda=\{x\in\Sigma_\lambda\; :\; w_\lambda<0\}$. Since $w_\lambda\geq0$ on $\partial\Sigma_\lambda$, it follows that $w_\lambda=0$ on $\partial\Sigma_\lambda^-$. Hence the function
\[
v_\lambda=\left\{\begin{array}{lll}
w_\lambda & \text{in}\; \Sigma_\lambda^-,
\\[2mm]
0 & \text{elsewhere},
\end{array}
\right.
\]
is in $\cC_{\rm b}(\Rd)$. We claim that for every $\lambda\in (0, \lambda_{\max})$
\begin{equation}\label{ET3.4C}
-\Psidel v_\lambda \leq f(u_\lambda)-f(u) + g(x) - g(x_\lambda)\quad \text{in}\; \Sigma^-_\lambda.
\end{equation}
To see this, let $\varphi$ be a test function that crosses $v_\lambda$ from below at a point $x\in \Sigma^-_\lambda$.
Then we see that $\varphi + (w_\lambda-v_\lambda)\in \cC_{\rm b}(x)$ and crosses $w_\lambda$ at $x$ from below. Denote
$\zeta_\lambda(x)=w_\lambda-v_\lambda$. Using \eqref{ET3.4B} it follows that
\begin{equation}\label{ET3.4D}
-\Psidel (\varphi+\zeta_\lambda)(x) \leq f(u_\lambda(x))-f(u(x)) + g(x) - g(x_\lambda).
\end{equation}
To obtain \eqref{ET3.4C} from \eqref{ET3.4D} we only need to show that
$$
\int_{\Rd} (\zeta_\lambda(x+z)-\zeta_\lambda(x)) j(\abs{z}) dz\geq 0.
$$
This can be done by following the argument of \cite[p8]{FW14} combined with the fact that $j:(0, \infty)
\to (0, \infty)$ is a strictly decreasing function. The proof can then be completed by the standard method of
moving planes.

\medskip
\noindent
\emph{Step 1}: If $\lambda<\lambda_{\max}$ is sufficiently close to $\lambda_{\max}$, then $w_\lambda>0$ in
$\Sigma_\lambda$. Indeed, note that if $\Sigma^-_\lambda\neq\emptyset$, then $v_\lambda$ satisfies \eqref{ET3.4C}.
Denoting
$$
c(x)=\frac{f(u_\lambda(x))-f(u(x))}{u_\lambda(x)-u(x)},
$$
and using the property of $g$, it then follows that
$$
-\Psidel v_\lambda - c(x) v_\lambda\leq 0 \quad \text{in}\; \Sigma^-_\lambda.
$$
Thus choosing $\lambda$ sufficiently close to $\lambda_{\max}$, it follows from \cref{C3.1} that $v_\lambda\geq 0$
in $\Rd$. Hence $\Sigma^-_\lambda=\emptyset$ and we have a contradiction. To show that $w_\lambda>0$ in $\Sigma_\lambda$,
assume to the contrary that $w_\lambda(x_0)=0$ for some $x_0\in\Sigma_\lambda$. Consider a non-negative test function $\varphi\in
\cC_{\rm b}(x_0)$, crossing $w_\lambda$ from below, with the property that $\varphi=0$ in $\sB_r(x_0)\Subset \Sigma_\lambda$
and $\varphi=w_\lambda$ in $\sB_{2r}(x_0)$. Furthermore, choose $r$ small enough such that $\sB_{2r}(x_0)\Subset \Sigma_\lambda$
and $\varphi\geq 0$ in $\Sigma_\lambda$. Then by using \eqref{ET3.4B} we obtain
\begin{equation}\label{ET3.4E}
-\Psidel \varphi(x_0)\leq  g(x_0) - g((x_0)_\lambda)\leq 0.
\end{equation}
Next we compute $-\Psidel \varphi(x_0)$. Note that $\varphi\geq 0$ in $R_\lambda=\{x\in\Rd\; :\; x_1\geq \lambda\}$. We have
\begin{align*}
-\Psidel \varphi(x_0) &= \int_{\Rd} \varphi(z) j(|z-x|) dz
\\
&= \int_{R_\lambda} \varphi(z) j(|z-x_0|) dz + \int_{\sR_\lambda R_\lambda} \varphi(z) j(|z-x_0|) dz
\\
&= \int_{R_\lambda} \varphi(z) j(|z-x_0|) dz + \int_{\sR_\lambda R_\lambda} w_\lambda(z) j(|z-x_0|) dz
\\
&= \int_{R_\lambda} \varphi(z) j(|z-x_0|) dz + \int_{R_\lambda} w_\lambda(z_\lambda) j(|z_\lambda-x_0|) dz
\\
&= \int_{R_\lambda} \varphi(z) j(|z-x_0|) dz - \int_{R_\lambda} w_\lambda(z) j(|z_\lambda-x_0|) dz
\\
&= \int_{R_\lambda\setminus B_{2r}(x_0)} w_\lambda(z) (j(|z-x_0|)-j(|z_\lambda-x_0|)) dz - \int_{B_{2r}(x_0)}
w_\lambda(z) j(|z_\lambda-x_0|) dz.
\end{align*}
Since $|z_\lambda-x_0|>|z-x_0|$ and thus $j(|z-x_0|>j(|z_\lambda-x_0|)$, the first term in the above expression is
non-negative. In fact, since we can choose $r$ arbitrarily small, the first term is positive, unless $w_\lambda =0$ in
$R_\lambda$, contradicting that $w_\lambda\neq 0$ on $\partial\Sigma_\lambda\cap \bar\cD$. Thus the first integral is positive
for some $\hat{r}>0$, and by monotone convergence we obtain
$$
\lim_{r\to 0}\int_{R_\lambda\setminus B_{2r}(x_0)} w_\lambda(z) (j(|z-x_0|)-j(|z_\lambda-x_0|))
\geq \int_{R_\lambda\setminus B_{2\hat{r}}(x_0)} w_\lambda(z) (j(|z-x_0|)-j(|z_\lambda-x_0|)) dz>0.
$$
On the other hand,
$$
\lim_{r\to 0} \int_{B_{2r}(x_0)} w_\lambda(z) j(|z_\lambda-x_0|) dz=0.
$$
Hence there exists $r>0$ small enough such that $-\Psidel \varphi(x_0)>0$, in contradiction with \eqref{ET3.4E}. This proves
that $w_\lambda>0$ is in $\Sigma_\lambda$, and shows the claim of Step 1.

\medskip
\noindent
\emph{Step 2}: It remains to show that $\inf\{\lambda>0\; :\; w_\lambda>0\; \text{in}\; \Sigma_\lambda\}=0$. This actually
follows by using the estimates in Step 1 above, in a similar way as discussed in \cite[p10]{FW14}. Also, strict monotonicity
of $u$ in the $x_1$ direction can be obtained by following the calculations in Step~3 of the same argument.
\end{proof}

\begin{remark}
In Theorem~\ref{T3.4}, the positivity assumption on $u$ in $\cD$ can be relaxed by assuming $u\gneq 0$ in $\cD$ and the same 
conclusion holds. This modification can be adjusted by following through the proof of \cite[Th.~1.1]{JW16} (see claim 1 there); 
see also \cite{BJ20,JW14}.
\end{remark}

Using the radial symmetry of the function $j$ in \eqref{E2.3}, we easily arrive at
\begin{corollary}
Let $\Psi$ satisfy the WLSC property, and $g$ be a radially decreasing function. Then every solution of
\begin{equation*}
\left\{\begin{array}{lll}
-\Psidel u = f(u) - g(x) \quad \text{in}\;\; \sB_1(0),
\\[2mm]
\qquad \qquad u > 0 \quad \text{in}\;\; \sB_1(0),
\\[2mm]
\qquad \qquad  u =0 \quad \text{in}\;\; \sB^c_1(0),
\end{array}
\right.
\end{equation*}
is radial and strictly decreasing in $\abs{x}$.
\end{corollary}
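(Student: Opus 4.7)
The plan is to deduce the corollary from \cref{T3.4} by exploiting the rotational invariance of the data. The ball $\sB_1(0)$ is convex in every direction and symmetric about every hyperplane through the origin; the jump kernel $j(\abs{\cdot})$ in \eqref{E2.3} is radial, so $-\Psidel$ commutes with orthogonal transformations of $\Rd$; and because $g$ is radially decreasing, it is simultaneously symmetric about any hyperplane through $0$ and decreasing in the corresponding normal direction. Hence all hypotheses of \cref{T3.4} are preserved under arbitrary rotations of coordinates.

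Executing this, I would fix an arbitrary unit vector $e\in\Rd$, perform an orthogonal change of coordinates mapping $e$ to the first standard basis vector, and apply \cref{T3.4} in the rotated frame. This yields symmetry of $u$ about the hyperplane $T_e=\{x\in\Rd:x\cdot e=0\}$ together with strict decrease of $u$ along the ray $\{te:t>0\}\cap\sB_1(0)$. To promote symmetry in every direction to full radial symmetry, given any $x,y\in\sB_1(0)\setminus\{0\}$ with $\abs{x}=\abs{y}$ and $x\neq y$, I take $e=(x-y)/\abs{x-y}$; then $T_e$ passes through $0$ and its associated reflection swaps $x$ and $y$, so $u(x)=u(y)$. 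Thus $u(x)=U(\abs{x})$ for some function $U:[0,1]\to[0,\infty)$.

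Strict radial monotonicity follows at once: for $0<r<s<1$, the one-dimensional strict-decrease assertion from \cref{T3.4} in the direction of the first basis vector $e_1$, applied to the points $re_1$ and $se_1$, gives $U(s)=u(se_1)<u(re_1)=U(r)$. The only substantive point to verify is that \cref{T3.4} genuinely applies after an arbitrary orthogonal change of coordinates, but this is transparent: its proof relies only on the radiality of $j$ and on $j:(0,\infty)\to(0,\infty)$ being strictly decreasing, and both properties are rotation-invariant. I therefore expect no real analytic obstacle -- the role of the corollary is simply to specialize \cref{T3.4} to the radially symmetric setting, where one-direction symmetry in every direction upgrades to genuine radial symmetry and monotonicity in $\abs{x}$.
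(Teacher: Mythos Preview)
Your proposal is correct and follows exactly the route the paper intends: the paper simply says the corollary follows from the radial symmetry of $j$ in \eqref{E2.3}, meaning that \cref{T3.4} applies after an arbitrary rotation of coordinates, and you have spelled out precisely this reduction together with the standard passage from reflection symmetry in every direction to full radial symmetry.
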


\begin{remark}
{\rm
By similar arguments as in \cite[Th.~1.3]{FW14}, we can extend our result to the following system of equations, and
establish radial symmetry of the positive solutions of
\begin{equation*}
\left\{\begin{array}{lll}
-\Psidel u = f_1(v) - g_1(x) \quad \text{in}\; \sB_1(0),
\\[2mm]
-\Psidel v = f_2(u) - g_2(x) \quad \text{in}\; \sB_1(0),
\\[2mm]
\qquad \qquad u>0,\; v>0 \;  \quad \text{in}\; \sB_1(0),
\end{array}
\right.
\end{equation*}
where $f_1, f_2$ are locally Lipschitz continuous and decreasing, and $g_1, g_2$ are radially decreasing.
}
\end{remark}

\subsection{The overdetermined non-local torsion equation}
In this section we use our maximum principle to revisit the overdetermined torsion problem considered in
\cite[Th.~1.3]{GS16}. Denote
$$
\phi(r) =\frac{1}{\sqrt{\Psi(1/r^2)}}, \quad r>0.
$$
As seen in \cref{T2.2}, the function $\phi$ describes the boundary behaviour of the Dirichlet solutions. Also, recall 
the renewal function $V$ from \eqref{E3.5}-\eqref{E3.6}. When $\Psi$ is regularly varying at infinity with some parameter 
$\alpha>0$, we know from \cite[Prop.~4.3, Rem.~4.7]{KMR} that
\begin{equation}\label{E4.6}
\lim_{r\to 0} \frac{V(r)}{\phi(r)}=\kappa>0,
\end{equation}
for a constant $\kappa$. In fact, $\kappa=\frac{1}{\Gamma(1+2\alpha)}$. Now consider a solution $u$ of the non-local 
torsion equation
\begin{equation}\label{E4.7}
-\Psidel u = -1\quad \text{in}\; \cD, \quad \text{and}\quad u=0\quad \text{in}\; \cD^c.
\end{equation}
Also, let Assumptions \ref{A2.1}-\ref{A2.2} hold. Then it is known from \cite{KKLL} that $u(x)/V(\delta_\cD(x))$
is uniformly continuous in $\cD$ and thus it can be extended to $\bar \cD$. Define
$$
\Tr \left(\frac{u}{\phi}\right)(x)=\lim_{\cD\ni z\to x}\frac{u(z)}{\phi(\delta_\cD(z))}, \quad x\in\partial \cD,
$$
thinking of it as the ``trace" on the boundary of the domain. This can also be seen as the non-local analogue of the 
boundary normal derivative for the case $-\Psidel=\Delta$. In view of \eqref{E4.6}, the above map is well-defined
and we have
$$
\Tr \left(\frac{u}{\phi}\right) = \kappa \Tr \left(\frac{u}{V}\right).
$$
Consider the solution $u_r$ of \eqref{E4.7} in a ball $\sB_r(0)$. In particular, $u_r(x)=\Exp^x[\uptau_r]$ where
$\uptau_r$ is the first exit time of $\pro X$ from $\sB_r(0)$. It is immediate that $u_r$ is a radial function and so
the trace is constant on $\{|x|=r\}$. Let $\sH(r)$ be the value of this trace on $\{|x|=r\}$. It is also direct
to see that $\sH(r)$ is non-decreasing. The following result improves on this.

\begin{lemma}
The function $\sH$ is strictly increasing on $(0, \infty)$.
\end{lemma}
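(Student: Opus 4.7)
The plan is to exploit translation invariance of the subordinate Brownian motion $\pro X$ to place an internally tangent copy of $\sB_{r_1}(0)$ inside $\sB_{r_2}(0)$ and then extract a strict Hopf-type gap between the two torsion functions at the common tangent point. Concretely, fix $r_1<r_2$, let $e_1$ denote the first coordinate vector, set $y=(r_2-r_1)e_1$, and let $B=\sB_{r_1}(y)$. Then $B\subset \sB_{r_2}(0)$ with a single internal contact at $x^*=r_2 e_1$. By rotational symmetry the value $\sH(r_i)$ is independent of the chosen boundary point, so it suffices to compare traces at $x^*\in\partial\sB_{r_2}(0)$ and at $r_1 e_1\in\partial\sB_{r_1}(0)$.

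Next, by translation invariance of $\pro X$ one has $\Exp^x[\uptau_B]=u_{r_1}(x-y)$, and since $B\subsetneq \sB_{r_2}(0)$ the monotonicity of exit times yields $u_{r_1}(x-y)\leq u_{r_2}(x)$ for every $x\in\Rd$. Define $w(x)=u_{r_2}(x)-u_{r_1}(x-y)$. Then $w\in\cC_{\rm b}(\Rd)$, $w\geq 0$, and $w>0$ on the nonempty open set $\sB_{r_2}(0)\setminus \overline{B}$ (by Hopf's lemma applied to $u_{r_2}$ on $\sB_{r_2}(0)$, one has $u_{r_2}>0$ there, while $u_{r_1}(\,\cdot-y)$ vanishes outside $B$). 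Because both $u_{r_2}$ and $u_{r_1}(\,\cdot-y)$ are continuous viscosity solutions of the \emph{same} linear equation $-\Psidel u=-1$ in $B$, linearity of $\Psidel$ shows that $w$ is a nonnegative viscosity solution of $-\Psidel w=0$ in $B$.

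Now invoke the Hopf's lemma in \cref{T2.2} (with $c\equiv 0$, and noting $w\not\equiv 0$): there exists $\eta>0$ such that
\begin{equation*}
w(x)\geq \eta\,\phi(\delta_B(x)),\quad x\in B.
\end{equation*}
Approach $x^*$ along the common inward normal by setting $x_t=(r_2-t)e_1$ for small $t>0$. A direct computation gives
\begin{equation*}
\delta_B(x_t)=r_1-|x_t-y|=t,\qquad \delta_{\sB_{r_2}(0)}(x_t)=r_2-|x_t|=t,\qquad x_t-y=(r_1-t)e_1\in\sB_{r_1}(0),
\end{equation*}
with $\delta_{\sB_{r_1}(0)}(x_t-y)=t$ as well. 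Dividing the Hopf estimate by $\phi(t)$ and letting $t\downarrow 0$, the first quotient $u_{r_2}(x_t)/\phi(t)$ converges to $\sH(r_2)$ and the second quotient $u_{r_1}(x_t-y)/\phi(t)$ converges to $\sH(r_1)$, whence
\begin{equation*}
\sH(r_2)-\sH(r_1)\geq \eta>0.
\end{equation*}

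The step I expect to be the main obstacle is the justification that the difference $w$ lies within the scope of \cref{T2.2}, i.e.\ that $w$ is in fact a viscosity (super)solution of $-\Psidel w=0$ in $B$. This is essentially a linearity-of-test-function argument: for a test function $\varphi$ touching $w$ from below at $x_0\in B$, one uses the known viscosity inequality for $u_{r_2}$ at $x_0$ together with the fact that $u_{r_1}(\,\cdot-y)$ is smooth in a neighbourhood of $x_0$ (as a probabilistic exit-time functional from a smooth domain with $x_0$ in its interior) to evaluate $-\Psidel u_{r_1}(\cdot-y)(x_0)=-1$ classically, and subtract. Alternatively, the interior smoothness of both $u_{r_1}$ and $u_{r_2}$ allows one to make the whole argument classical on $B$.
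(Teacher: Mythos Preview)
Your argument is correct and reaches the same conclusion, but by a genuinely different route from the paper. Both proofs start from the same geometric configuration: a ball of radius $r_1$ placed internally tangent to the larger ball $\sB_{r_2}(0)$ at a common boundary point on the $x_1$-axis. From there, however, the two arguments diverge.

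The paper proceeds probabilistically. It writes $\Exp^x[\uptau_2]=\Exp^x[\uptau_1]+\Exp^x[\uptau_2-\uptau_1]$ via the strong Markov property, picks an auxiliary ball $\sB_0\Subset\sB_{r_2}\setminus\sB_{r_1}$, and uses the Ikeda--Watanabe formula to bound $\Prob^x(X_{\uptau_1}\in\sB_0)$ from below by $j(R)\,|\sB_0|\,\Exp^x[\uptau_1]$. This yields the \emph{multiplicative} estimate $\Exp^x[\uptau_2]\geq(1+\kappa_1)\Exp^x[\uptau_1]$, and dividing by $\phi(R-x_1)$ and passing to the limit gives $\sH(R)\geq(1+\kappa_1)\sH(r)$. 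Your approach is purely analytic: you form the difference $w=u_{r_2}-u_{r_1}(\cdot-y)$, observe that $w\geq 0$ in $\Rd$, $w\not\equiv 0$, and $-\Psidel w=0$ in $B$ (in the viscosity sense, by the same device from \cite[Lem.~5.8]{CS09} that the paper itself invokes repeatedly), and then feed $w$ directly into the paper's own Hopf lemma, \cref{T2.2}. This produces the \emph{additive} gap $\sH(r_2)-\sH(r_1)\geq\eta>0$. Both conclusions suffice for strict monotonicity; the paper's multiplicative bound is slightly sharper but not needed for the lemma as stated. Your route is more self-referential (it leans on \cref{T2.2}, which the paper has already established), while the paper's is independent of that result and showcases the probabilistic machinery (ladder heights, L\'evy system) underlying the whole article. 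The step you flagged---that $w$ is a legitimate viscosity super-solution in $B$---is handled exactly as in the proofs of \cref{T2.1}, \cref{T3.2} and \cref{T4.2}, so it is not a gap.
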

\begin{proof}
Consider $0<r<R$. Let $\sB_2$ be the ball of radius $R$ centered at $0$, and $\sB_1$ be a ball of radius $r$ tangential
to $z=(R, 0, \ldots, 0)\in \partial \sB_2$ from inside. 
Also, consider a ball $\sB_0$ compactly contained inside $\sB_2\setminus \sB_1$.

Denote by $\uptau_1$ and $\uptau_2$ the first exit times of $\pro X$ from $\sB_1$ and $\sB_2$, respectively. Note that
\begin{equation}\label{EL4.1A}
\sH(r) =\lim_{x\to R}\frac{\Exp^{(x, 0')}[\uptau_1]}{\phi(R-x)} \quad \text{and}\quad \sH(R) =
\lim_{x\to R}\frac{\Exp^{(x, 0')}[\uptau_2]}{\phi(R-x)}.
\end{equation}
Using the strong Markov property of subordinate Brownian motion, we see that for every $x\in \sB_1$
\begin{align*}
\Exp^x[\uptau_2] & = \Exp^x[\uptau_1] + \Exp^x[\uptau_2-\uptau_1]
\\
& = \Exp^x[\uptau_1] + \Exp^x\left[\Ind_{\sB_2\setminus \sB_1}(X_{\uptau_1}) \Exp^{X_{\uptau_1}}[\uptau_2]\right]
\\
&\geq \Exp^x[\uptau_1] + \Exp^x\left[\Ind_{\sB_0}(X_{\uptau_1}) \Exp^{X_{\uptau_1}}[\uptau_2]\right]
\\
&\geq \Exp^x[\uptau_1] + \left(\min_{z\in \sB_0} \Exp^z[\uptau_2]\right) \Prob^x(X_{\uptau_1}\in \sB_0).
\end{align*}
By using the Ikeda-Watanabe formula \cite{IW}, we find that
\begin{align*}
\Prob^x(X_{\uptau_1}\in \sB_0)
&=
\int_{\sB_1} \int_{\sB_0} j(\abs{z-y}) dz G_{\sB_1}(x, dy)
\\
&\geq j(R)\, \abs{\sB_0}\, \int_{\sB_1}  G_{\sB_1}(x, dy)
\\
&=
j(R)\, \abs{\sB_0}\, \Exp^x[\uptau_1],
\end{align*}
where $G_{\sB_1}$ denotes the Green function in $\sB_1$. Thus for a positive $\kappa_1$ we have
$$
\Exp^x[\uptau_2]\geq (1+\kappa_1) \Exp^x[\uptau_1].
$$
Now the proof follows from \eqref{EL4.1A}.
\end{proof}

\begin{remark}
{\rm
Unfortunately, we are not able to find an explicit formula for $\sH$ using $\phi$ or $V$. However, using Assumption
\ref{A2.1} and \cite[Th.~4.1]{BGR15} it is easily seen that $\sH\asymp V$, and therefore, by using \eqref{E3.5} we
get $\sH\asymp \phi$. Note that for $\Psi(s)=s^{\alpha/2}$, $\alpha\in (0, 2)$, the exact expression of the expected
first exit time is known and one can explicitly calculate $\sH$ in this case (see for instance, \cite{GS16}).
}
\end{remark}

Finally, we consider the over-determined torsion problem.
\begin{theorem}\label{T4.2}
Let $\cD$ be a $\cC^{1,1}$ domain containing $0$. Suppose that Assumptions \ref{A2.1}-\ref{A2.2} hold, and $\Psi$ is
regularly varying at infinity. Let $q:(0, \infty)\to(0, \infty)$ be such that $\frac{q}{\sH}$ is non-decreasing in
$(0, \infty)$. Then the overdetermined problem
\begin{equation}\label{ET4.2A}
\left\{\begin{array}{lll}
\quad -\Psidel u = \; -1\qquad \text{in}\;\; \cD,
\\[2mm]
\hspace{1.85cm} u = \; 0 \qquad\;\;\; \text{in}\;\; \cD^c,
\\[2mm]
\hspace{0.9cm} \Tr\left(\frac{u}{\phi}\right) = \; q(\abs{\cdot})\quad \text{on} \;\; \partial \cD,
\end{array}
\right.
\end{equation}
has a solution if and only if $\cD$ is a ball centered at $0$ and $q=\sH$ on $\partial \cD$.
\end{theorem}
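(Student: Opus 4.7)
My strategy would be to verify the sufficient direction directly and to establish the necessary direction via the method of moving planes combined with the overdetermined trace condition. For sufficiency, if $\cD = \sB_R(0)$ and $q \equiv \sH(R)$ on $\partial\sB_R(0)$, then the rotational invariance of the operator together with uniqueness for \eqref{E4.7} forces $u$ to coincide with the radial torsion function $u_R$ considered just above the preceding lemma, and its boundary trace is exactly $\sH(R) = q(R)$ by the very definition of $\sH$.

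For the converse, I would fix an arbitrary unit direction $e$, rotate coordinates so $e = e_1$, and import the notation $T_\lambda$, $\Sigma_\lambda$, $u_\lambda(x) = u(x_\lambda)$, $w_\lambda = u_\lambda - u$ from the proof of \cref{T3.4}. Since $-\Psidel u = -1$ is constant on $\cD$, one checks via \cite[Lem.~5.8]{CS09} that $w_\lambda$ is a viscosity solution of $-\Psidel w_\lambda = 0$ in $\Sigma_\lambda$. For $\lambda$ close to $\lambda_{\max} := \sup_{x \in \cD} x_1$, the set $\Sigma_\lambda$ has small measure, so the truncated negative-part construction from Step~1 of the proof of \cref{T3.4} combined with \cref{C3.1} yields $w_\lambda \geq 0$ in $\Sigma_\lambda$. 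I then define the critical value
\begin{equation*}
\lambda^* \;:=\; \inf\{\lambda \,:\, w_\mu \geq 0 \text{ in } \Sigma_\mu \text{ for every } \mu \in (\lambda,\lambda_{\max})\},
\end{equation*}
so that $w_{\lambda^*} \geq 0$ in $\Sigma_{\lambda^*}$ by continuity. The antisymmetric strong maximum principle argument of the proof of \cref{T3.4}, Step~1 (exploiting $w_{\lambda^*}(x_{\lambda^*}) = -w_{\lambda^*}(x)$) gives the dichotomy: either $w_{\lambda^*} \equiv 0$ in $\Rd$, or $w_{\lambda^*} > 0$ in $\Sigma_{\lambda^*}$. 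The latter is excluded by a continuation step: fix a compact $\cK \Subset \Sigma_{\lambda^*}$ with $|\Sigma_{\lambda^*} \setminus \cK| < \varepsilon/2$ (where $\varepsilon$ is from \cref{C3.1}); continuity of $u$ then gives $w_\lambda > 0$ on $\cK$ and $|\Sigma_\lambda \setminus \cK| < \varepsilon$ for $\lambda$ slightly less than $\lambda^*$, so applying \cref{C3.1} on $\Sigma_\lambda \setminus \cK$ yields $w_\lambda \geq 0$ throughout $\Sigma_\lambda$, contradicting minimality of $\lambda^*$. Hence $w_{\lambda^*} \equiv 0$, forcing both $u$ and $\cD$ to be symmetric about $T_{\lambda^*}$.

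Next I would invoke the trace condition to pin down $T_{\lambda^*}$. For any $x \in \partial\cD$ with $x_1 > \lambda^*$, symmetry of $u$ gives $q(|x|) = \Tr(u/\phi)(x) = \Tr(u/\phi)(x_{\lambda^*}) = q(|x_{\lambda^*}|)$. A direct computation gives $|x_{\lambda^*}|^2 - |x|^2 = 4\lambda^*(\lambda^* - x_1)$, so $\lambda^* > 0$ would force $|x_{\lambda^*}| < |x|$; combining this with the monotonicity of $q/\sH$ and the equality of $q$-values yields $\sH(|x_{\lambda^*}|) \geq \sH(|x|)$, contradicting the strict monotonicity of $\sH$ established in the preceding lemma. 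Hence $\lambda^* \leq 0$. Repeating the whole argument in the direction $-e_1$ produces another hyperplane of symmetry of $\cD$ perpendicular to $e_1$; since two distinct parallel symmetry planes of a bounded set would entail a nontrivial translational invariance, the two hyperplanes coincide, and combining the two bounds gives $\lambda^* = 0$. Varying $e \in \mathbb{S}^{d-1}$, $\cD$ is symmetric about every hyperplane through the origin, which forces $\cD = \sB_R(0)$ for some $R > 0$. Uniqueness then identifies $u$ with $u_R$, and evaluating the trace gives $q(R) = \sH(R)$.

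The main technical obstacle is justifying the nonlocal moving planes machinery in the viscosity framework, specifically verifying that $w_\lambda$ and its truncated negative part $v_\lambda$ satisfy the appropriate viscosity (in)equalities required to invoke \cref{C3.1} and the antisymmetric strong maximum principle. These points are handled essentially identically to Step~1 of the proof of \cref{T3.4} via \cite[Lem.~5.8]{CS09}, so the genuinely new ingredient is the final step combining the resulting symmetry with the hypothesis on $q/\sH$ and the strict monotonicity of $\sH$ to force the symmetry hyperplane through the origin in every direction.
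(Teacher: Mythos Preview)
Your approach via moving planes is genuinely different from the paper's, but it contains a real gap in the continuation step. You define $\lambda^* = \inf\{\lambda : w_\mu \geq 0 \text{ in } \Sigma_\mu \text{ for all } \mu > \lambda\}$ and then argue that if $w_{\lambda^*} > 0$ in $\Sigma_{\lambda^*}$, one can push $\lambda$ slightly below $\lambda^*$ and still get $w_\lambda \geq 0$ via \cref{C3.1}. But that application of the narrow-domain principle requires the equation $-\Psidel w_\lambda = 0$ to hold in $\Sigma_\lambda$, which in turn requires the reflected cap $\sR_\lambda \Sigma_\lambda$ to lie inside $\cD$. For a general $\cC^{1,1}$ domain (no convexity is assumed here), this geometric condition fails for $\lambda$ below the critical reflection position $\lambda_0$. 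In fact one checks that $\lambda^* = \lambda_0$: for $\mu < \lambda_0$ there is some $x \in \Sigma_\mu$ with $x_\mu \notin \cD$, so $w_\mu(x) = -u(x) < 0$. Hence continuation below $\lambda^*$ is blocked by the geometry, and your dichotomy does not force $w_{\lambda^*} \equiv 0$. In Serrin-type proofs this is exactly the place where the overdetermined boundary data must be used \emph{at} the touching point (via a Hopf or corner lemma) to rule out $w_{\lambda^*} > 0$; you instead reserve the trace condition for the later step of locating the hyperplane, which leaves the symmetry conclusion unproved. Note also that \cref{T3.4}, whose Step~1 you invoke, explicitly assumes $\cD$ convex in the $x_1$ direction, so that the geometric obstruction never arises there.

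The paper's own argument sidesteps moving planes entirely. Assuming $\cD$ is not a ball centred at $0$, it takes the largest inscribed ball $\sB_r(0)$ touching $\partial\cD$ at $z_1$ and the smallest circumscribed ball $\sB_R(0)$ touching at $z_2$, with $r<R$. Comparison gives $u_r \leq u \leq u_R$, and \cref{T2.2} applied to $u-u_r$ and $u_R-u$ yields the strict trace inequalities $q(|z_1|) > \sH(r) = \sH(|z_1|)$ and $q(|z_2|) < \sH(R) = \sH(|z_2|)$. Since $|z_1| = r < R = |z_2|$ and $q/\sH$ is non-decreasing, this produces $1 < q(|z_1|)/\sH(|z_1|) \leq q(|z_2|)/\sH(|z_2|) < 1$, a contradiction. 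This is considerably shorter and requires no convexity or reflection machinery.
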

\begin{proof}
From the above discussion we see that \eqref{ET4.2A} always has a solution when $\cD$ is a ball around $0$ and $q=\sH$.
Thus we only need to prove the converse direction. Suppose that $\cD$ is not a ball centered at $0$. Then we can find
two concentric balls $\sB_{r}(0)$ and $\sB_R(0)$, with $r<R$, such that $\sB_r(0)$ touches $z_1\in \partial \cD$ from
inside, and $\cD\subset \sB_R(0)$ with $z_2\in\partial \cD\cap\partial \sB_R(0)$. It is also obvious that $z_1\neq z_2$,
since $r<R$. Let $u_r(x)=\Exp^x[\uptau_r]$, where $\uptau_r$ is the first exit time from $\sB_r(0)$.  Then we have
\begin{equation}\label{ET4.2B}
-\Psidel u_r =-1 \quad \text{in}\; \sB_r(0), \quad \text{and}\quad u_r=0\quad \text{in}\; \sB^c_r(0).
\end{equation}
Similarly, we define $u_R$ in $\sB_R(0)$. Using \cite[Lem.~5.8]{CS09}, we note from \eqref{ET4.2A}-\eqref{ET4.2B} that
\begin{equation}\label{ET4.2C}
-\Psidel (u-u_r) =0\quad \text{in}\; \sB_r(0), \quad \text{and}\quad u-u_r\geq 0\quad \text{in} \; \sB^c_r(0).
\end{equation}
By the comparison principle \cite[Th.~3.8]{KKLL}, we have $u\geq u_r$ in $\Rd$. Similarly, we also have $u\leq u_R$ in
$\Rd$. A combination of this then gives
\begin{equation*}\label{ET4.2D}
u_r\leq u\leq u_R\quad \text{in}\; \Rd.
\end{equation*}
Using \eqref{ET4.2C} and \cref{T2.2}, it follows that either $u=u_r$ in $\Rd$ or $q(|z_1|)>\sH(|z_1|)$. Assuming that the 
first case holds implies $\cD=B_r(0)$, contradicting the assumption. Hence $q(|z_1|)>\sH(|z_1|)$, and a similar argument 
also shows $q(|z_2|)<\sH(|z_2|)$. In sum we have
$$
1<\frac{q(|z_1|)}{\sH(|z_1|)}\leq \frac{q(|z_2|)}{\sH(|z_2|)}<1,
$$
which is impossible, and thus $\cD$ is a ball centered at $0$.  To complete the proof, note that the first two equations in
\eqref{ET4.2A} imply $u(x)=\Exp^x[\uptau_\cD]$, and since $\cD$ is a ball, it follows that $q=\sH$ on $\partial\cD$.
\end{proof}

\subsection*{Acknowledgments}
The authors wish thank the anonymous referees for their careful reading of the manuscript, and helpful comments and suggestions.
This research of AB was supported in part by an INSPIRE faculty fellowship and DST-SERB grants EMR/2016/004810, MTR/2018/000028.

\end{document}